\numberwithin{equation}{section}
\newtheorem{theo}{Theorem}
\newtheorem{prop}{Proposition}
\newtheorem{lem}{Lemma}
\theoremstyle{remark}
\def\om{\omega}
\def\({\left(}
\def\){\right)}
\def\[{\left[}
\def\]{\right]}
\def\li{\operatorname{Li}}
\def\dd{\textup{d}}
\newcommand{\Z}{\mathbb{Z}}
\newcommand{\Q}{\mathbb{Q}}
\newcommand{\C}{\mathbb{C}}
\newcommand{\calF}{{\mathcal F}}
\newcommand{\BBB}{{\bf H}}
\newcommand{\opa}{\BBB^{n+1}_{n+1,0}\BBB^{n+1}_{n+1,n+1}}
\newcommand{\opb}{\BBB^{n+1}_{0,n+1}\BBB^{n+1}_{n+1,n+1}}
\newcommand{\gdo}{\mathcal{O}}
\newcommand{\Prn}{\mathcal{P}_{r,n}}
\newcommand{\Qrn}{\mathcal{Q}_{r,n}}
\newcommand{\soro}{\mathcal{R}_{r,n}}
\newcommand{\mzero}{\mathcal{M}_0}
\begin{document}

\title[]{Multiple zeta values, Pad\'e approximation and Vasilyev's conjecture}

\author[]{S. Fischler and T. Rivoal}
\date{\today}

\subjclass[2010]{Primary 11M32, 41A21; Secondary 11J72, 33C60}

\keywords{Multiple zeta value, Pad\'e approximation, Hypergeometric integrals, Polylogarithm}

\begin{abstract} Sorokin gave in 1996 a new proof that $\pi$ is transcendental. 
It is based on a simultaneous Pad\'e approximation problem involving certain multiple 
polylogarithms, which evaluated at the point $1$ are multiple zeta values equal to powers of $\pi$. 
In this paper we construct a Pad\'e approximation problem  of the same flavour, and prove 
that it has a unique solution up to proportionality. At the point $1$, this provides a 
rational linear combination of $1$ and multiple zeta values in an extended sense that turn 
out to be values of the Riemann $\zeta$ function at odd integers. As an application, we obtain  a new 
proof of Vasilyev's conjecture  for any odd weight, concerning the explicit evaluation of certain 
hypergeometric multiple integrals; it  was first proved by Zudilin in 2003. 
\end{abstract}

\maketitle

\section{Introduction}
The goal 
of this paper is to provide a completely new proof of Vasilyev's conjecture for any odd weight $d\ge 3$ 
by solving a simultaneous Pad\'e approximation problem involving multiple polylogarithms.  Before explaining in details 
our approach, we provide some background. Vasilyev~\cite{vasiliev} conjectured in 1996, 
that for any integers $d\ge 2$ and $n\ge 0$, 
\begin{equation}\label{eq:intro1}
J_{d,n}:=\int_{[0,1]^d} \frac{\prod_{j=1}^d x_j^n(1-x_j)^n}{Q_d(x_1,  \ldots, x_d)^{n+1}} \dd x_j \in \mathbb Q + 
\mathbb Q \zeta(2+e_d) + \mathbb Q \zeta(4+e_d) + \cdots +  \mathbb Q \zeta(d)
\end{equation}
where $e_d=0$ if $d$ is even, $e_d=1$ otherwise, and $Q_1(x_1):=1-x_1$,
\begin{align*}
Q_d(x_1, \ldots, x_d):&=1-Q_{d-1}(x_1, \ldots, x_{d-1})x_d, \quad d\ge 2
\\                   &=1-(1-(\cdots 1-(1-x_1)x_2\cdots )x_{d-1})x_d.
\end{align*}
This conjecture was already known to be true for $d=2$ and $d=3$, since Beukers~\cite{beukers} used these integrals to 
get new and quick versions of Ap\'ery's proofs \cite{Apery} of the irrationality of $\zeta(2)$ and $\zeta(3)$. 
Vasilyev himself proved his conjecture in 
the cases $d=4$ and $d=5$, results which in fact led him to the conjecture. The first complete proof 
was given by Zudilin~\cite{zudilin} who showed that $J_{d,n}$ is   equal to a very-well-poised hypergeometric series 
whose value was already known to be  in $\mathbb Q + \mathbb Q \zeta(2+e_d) + \mathbb Q \zeta(4+e_d) + \cdots 
+  \mathbb Q \zeta(d)$. Two other proofs of Vasilyev's conjecture  were subsequently found, 
one by Zlobin~\cite{zlobin} (direct attack) and another indirect one by 
Krattenthaler-Rivoal~\cite{kratriv} (limiting case of Andrews' hypergeometric identity, in the spirit of Zudilin). The 
fourth one, given in the present paper, is completely different since it relies on solving a 
simultaneous Pad\'e approximation problem involving multiple polylogarithms.  

\bigskip

To state this problem we need some notations. 
Given any finite word $\sigma$ built on a (possibly infinite) alphabet $\{a,b, \ldots\}$, 
we denote by   $\{\sigma\}_j:=\sigma\sigma\cdots \sigma$   the 
concatenation $j$ times of $\sigma$. By convention, $\{\sigma\}_0=\emptyset$. We will use two alphabets, 
namely $\mathbb N^{*} = \{1,2,\ldots\}$ 
and $\{\ell,s\}$. We consider  multiple polylogarithms in the following extended sense:
\begin{equation}\label{eq:intro4}
\li_{b_1b_2\cdots b_p}^{a_1a_2\cdots a_{p-1}}(z):=
\sum_{k_1\gtrsim k_2\gtrsim \cdots \gtrsim k_p\ge 1} \frac{z^{k_1}}{k_1^{b_1}k_2^{b_2}\cdots k_p^{b_p}}
\end{equation}
where $\vert z\vert <1$, $b_j\in \mathbb N^{*} $ and $a_j\in\{\ell,s\}$ for all $j$. For $j=1, \ldots, p-1$, 
the symbol $\gtrsim\;\in \{>, \ge \}$ in $k_j\gtrsim k_{j+1}$ 
is determined by the following rule: it is set to $>$ if $a_j=s$,  and   
to $\ge$ if $a_j=\ell$. In this way, $s$ stands for a {\em strict} inequality, 
and $\ell$ for a {\em large} one. If $a_j = s$ for any $j$ we obtain the usual multiple 
polylogarithm $\li_{b_1b_2\cdots b_p} (z)$; if $a_j = \ell$ for any $j$ we obtain the variant 
denoted by $\operatorname{La}_{b_1b_2\cdots b_p} (z)$ in \cite{crefiri} and 
by $\operatorname{Le}_{b_1b_2\cdots b_p} (z)$ by  
Ulanski\u{\i} and Zlobin. Sorokin used in~\cite{sorokin1} the functions 
 $\li_{\{1\}_{2r+1}}^{\{s\ell\}_{r}}(1-x)$ and $\li_{\{1\}_{2r}}^{\{ \ell s\}_{r-1} \ell}(1-x)$, 
which he denoted respectively by $\varepsilon_r(x)$ and $\varphi_r(x)$. In this paper, all multiple polylogarithms $
\li_{b_1b_2\cdots b_p}^{a_1a_2\cdots a_{p-1}}(z)$  will be considered for $z\in\C\setminus [1,\infty)$ using analytic continuation.
As usual, the integer $p$ in \eqref{eq:intro4}   is called the {\em depth}, and $b_1+\cdots+b_p$ is the {\em weight}.

\bigskip

Our main result is the explicit resolution of the following simultaneous Pad\'e approximation problem. 
Given   integers $n,r\ge 0$, we want to find 
polynomials $A_{\rho,r,n}(z)$,  $B_{\rho,r,n}(z)$, $C_{\rho,r,n}(z)$, 
$D_{r,n}(z) \in \mathbb C[z]$, for $0\leq \rho \leq r$, all of degree at most $n$, such that
\begin{align*}
S_{r,n}(z):=&\sum_{\rho=0}^r \bigg[A_{\rho,r,n}(z) \li_{2\{1\}_{2\rho+1}}^{\{\ell s\}_\rho\ell}\Big(\frac1z\Big)
+B_{\rho,r,n}(z) \li_{\{1\}_{2\rho+2}}^{\{\ell s\}_\rho\ell}\Big(\frac1z\Big)
\\
&\hspace{2cm} +C_{\rho,r,n}(z) \li_{\{1\}_{2\rho+1}}^{\{s\ell\}_\rho}\Big(\frac1z\Big)\bigg]+D_{r,n}(z)
= \mathcal{O}\Big(\frac1{z^{(r+1)(n+1)}}\Big)
\\
U_{j,r,n}(z):=&\sum_{\rho=j}^r A_{\rho,r,n}(z) \li_{1\{2\}_{r-\rho}}^{\{\ell\}_{r-\rho}}(1-z) 
+ B_{j,r,n}(z) = \mathcal{O}\big((1-z)^{n+1}\big), 
\quad j=0, \ldots, r
\\
V_{j,r,n}(z):=&\sum_{\rho=j}^r A_{\rho,r,n}(z) \li_{\{2\}_{r-\rho+1}}^{\{\ell\}_{r-\rho}}(1-z) 
+ C_{j,r,n}(z) = \mathcal{O}\big((1-z)^{n+1}\big), \quad j=0, \ldots, r.
\end{align*} 
We will denote by $\mathcal{P}_{r,n}$ this Pad\'e approximation  problem. The various symbols 
$\mathcal{O}$ have the following 
meaning. The function $S_{r,n}(z)$ is obviously analytic at $z=\infty$ and we ask   its order there to  be at 
least $(r+1)(n+1)$. Similarly, the functions $U_{j,r,n}(z)$ and $V_{j,r,n}(z)$ are analytic at $z=1$ and we ask 
  their orders there to be at least $n+1$. This is a mixed  Pad\'e approximation problem, 
namely in between type $I$ problems and 
type $II$ problems. 
  Similar mixed Pad\'e approximation problems often occur in the Diophantine theory of (multiple) 
zeta values; see for instance~\cite{firi, sorokin2, sorokin1}.

\medskip

The problem $\mathcal{P}_{r,n}$ can be trivially converted into a linear algebra problem: it amounts to solving a 
system of $(3r+4)(n+1)-1$ linear equations in  $(3r+4)(n+1)$ unknowns (the coefficients 
of the polynomials). Hence, there is at least one non identically zero solution. 
Our main theorem shows that the solution is unique up to a multiplicative constant.

\medskip

\begin{theo}\label{theo:1} For any integers $n,r\ge 0$, 
the function $S_{r,n}(z)$ in $\mathcal{P}_{r,n}$ is given by the following  
hypergeometric integral (up to a multiplicative constant), which converges for any 
$z\in \mathbb C\setminus[0,1)$:
\begin{multline}\label{eq:intro6}
S_{r,n}(z)=(-1)^{n+1}z^{(r+1)(n+1)}
\\
\times \int_{[0,1]^{2r+3}} 
\frac{\displaystyle u_0^{(r+1)(n+1)-1}(1-u_0)^n 
\prod_{j=1}^{r+1}\big((u_jv_j)^{(r-j+2)(n+1)-1}(1-u_j)^n(1-v_j)^n\big)}
{\displaystyle \prod_{j=1}^{r+1}\big((z-u_0u_1v_1\cdots u_{j-1}v_{j-1}u_j)^{n+1}
(z-u_0u_1v_1\cdots u_{j}v_{j})^{n+1}\big)} \dd{\bf u}\dd{\bf v}.
\end{multline}
\end{theo}

For $r=0$, the problem $\mathcal{P}_{0,n}$ and the integral for  
$S_{0,n}(z)$  exactly match those considered by Sorokin in~\cite{sorokin2}, from which he deduced 
a new proof of Ap\'ery's theorem. However, our derivation of the integral for 
$S_{0,n}(z)$ is different from Sorokin's.

\bigskip

For any $r\geq 0$, the integral representation \eqref{eq:intro6}  provides a new proof of 
Vasilyev's conjecture, by taking $z=1$ (see \S~ \ref{sec:Vasilyev} for details). It 
would be very interesting 
to obtain a new proof of the infiniteness of irrational values among the $\zeta(2r+1)$ (see~\cite{BR, rivoal}) 
by  solving a Sorokin-type Pad\'e problem involving multiple polylogarithms as in Theorem \ref{theo:1}, as Sorokin 
did \cite{sorokin2} for Ap\'ery's 
theorem  (see \S~ \ref{subsecinf} at the end of the paper).

\medskip

 Theorem \ref{theo:1} is based on Sorokin's proof \cite{sorokin1}  of the 
transcendence of $\pi$, which relies  on the resolution of a simultaneous Pad\'e approximation 
problem involving certain multiple 
polylogarithms (see \S~\ref{sec:sorokinpi2} for details), as well as on the identity 
$\li_{\{2\}_{r}}^{\{s\}_{r-1}}(1)=\frac{\pi^{2r}}{(2r+1)!}$ for any integer $r\ge 1$.

\medskip

The integral for $S_{r,n}(z)$ can be used to get explicit expression of the polynomials, all of which obviously 
  have rational coefficients. This can be done by various means, for instance one can convert 
the integral into the series
\begin{multline*}
S_{r,n}(z)=
\\n!\!\!\sum_{k_0\ge \cdots \ge k_{2r+1}\ge 1} \frac{(k_0-k_1+1)_n(k_1-k_2+1)_n\cdots 
(k_{2r}-k_{2r+1}+1)_n(k_{2r+1}-n)_n}{\displaystyle \prod_{j=0}^r 
\big((k_{2j}+(r-j+1)(n+1))_{n+1}^{e_j}(k_{2j+1}+(r-j)(n+1))_{n+1}\big)} \frac1{z^{k_0+r(n+1)}}
\end{multline*}
(where $e_0=2$ and $e_j=1$ for $j\ge 1$) and then use the algorithm described in~\cite{crefiri}.

\bigskip

The paper is organised as follows.   In \S~\ref{sec:Vasilyev}, we deduce Vasilyev's  conjecture for odd values of~$d$ 
from Theorem~\ref{theo:1}. In \S~\ref{sec:technicalities}, we present a few tools needed for the proof of 
Theorem~\ref{theo:1}, in particular an iterative construction of hypergeometric multiple  integrals. 
In \S~\ref{sec:weight}, 
we prove an important representation formula for multiple polylogarithms and derive  a few consequences useful in 
the resolution of $\mathcal{P}_{r,n}$. Section~\ref{sec:proofthm1}, devoted to the proof of Theorem~\ref{theo:1}, 
is decomposed in many steps. The first 
two steps show  how to reduce the problem $\mathcal{P}_{r,n}$ to Sorokin's problem for $\pi^2$ (recalled in 
\S~\ref{sec:sorokinpi2}) and the subsequent steps complete the proof. At last we 
construct in \S~\ref{subsecinf} a family of integrals, containing \eqref{eq:intro6}, 
which enable one to prove that infinitely many odd zeta values  $\zeta(2r+1)$ are irrational \cite{BR, rivoal}.

\section{New proof of Vasilyev's  conjecture for  odd weights} \label{sec:Vasilyev}

To deduce Vasilyev's  conjecture from Theorem~\ref{theo:1}, we first define (when $b_1\geq 2$)   
extended multiple zeta values by
\begin{equation}\label{eq15bis}
\zeta_{b_1b_2\cdots b_p}^{a_1a_2\cdots a_{p-1}}:=\li_{b_1b_2\cdots b_p}^{a_1a_2\cdots a_{p-1}}(1) = 
\sum_{k_1\gtrsim k_2\gtrsim \cdots \gtrsim k_p\ge 1} \frac{1}{k_1^{b_1}k_2^{b_2}\cdots k_p^{b_p}}
\end{equation}
with the same definition for the symbols $\gtrsim$ as in Eq. \eqref{eq:intro4}.  
In particular, when $a_j=s$ for all $j$, we have the usual multiple zeta values 
$\zeta_{b_1b_2\cdots b_p}^{ \{s\}_{p-1}}=\zeta(b_1,b_2,\ldots, b_p)$.

\bigskip

Then we   remark that the  Pad\'e conditions for the functions $U_{j,r,n}(z)$ and $V_{j,r,n}(z)$ in $\Prn$ 
ensure that all  polynomials 
$B_{j,r,n}(z)$ and $C_{j,r,n}(z)$ vanish at $z=1$ ($j=0, \ldots, r$). Since multiple polylogarithms have (at most) a 
logarithmic singularity at $z=1$, this implies that when we take the limit $z\to 1$ in~\eqref{eq:intro6}, we get 
\begin{align*}
(-1)^{n+1} \int_{[0,1]^{2r+3}} &
\frac{\displaystyle u_0^{(r+1)(n+1)-1}(1-u_0)^n \prod_{j=1}^{r+1}\big((u_jv_j)^{(r-j+2)(n+1)-1}(1-u_j)^n(1-v_j)^n\big)}
{\displaystyle \prod_{j=1}^{r+1}\big((1-u_0u_1v_1\cdots u_{j-1}v_{j-1}u_j)^{n+1}(1-u_0u_1v_1\cdots u_{j}v_{j})^{n+1}\big)} 
\dd{\bf u}\dd{\bf v} 
\\
&=\sum_{\rho=0}^r A_{\rho,r,n}(1) \zeta_{2\{1\}_{2\rho+1}}^{\{\ell s\}_\rho\ell} 
+D_{r,n}(1)
\end{align*}
where $A_{\rho,r,n}(1)$ and $D_{r,n}(1)$ are rational numbers. 
Moreover, it is proved in~\cite[Corollaire~8]{fischler2} that  this multiple integral  is equal 
to $J_{2r+3,n}$ for any integer $r\ge 0$ (see also \S~ \ref{subsecinf} below). 
To complete the proof of Vasilyev's   conjecture in this case, we simply need the following 
result, which plays the same role for us as the identity $\li_{\{2\}_{r}}^{\{s\}_{r-1}}(1)=\frac{\pi^{2r}}{(2r+1)!}$ for 
Sorokin in~\cite{sorokin1}.

\begin{prop}\label{prop:3} For any integer $k\ge 1$, we have
\begin{equation}\label{eq:intro7}
\zeta_{2\{1\}_{2k-1}}^{\{\ell s\}_{k-1}\ell}=\zeta_{\{2\}_k 1}^{\{\ell\}_{k}} = 2\zeta(2k+1).
\end{equation}
\end{prop}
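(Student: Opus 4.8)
The plan is to prove the two equalities in \eqref{eq:intro7} separately, the heart of the matter being the evaluation of the middle term. Written out, this term is the multiple zeta \emph{star} value
\[
\zeta_{\{2\}_k1}^{\{\ell\}_k}=\sum_{m_1\ge m_2\ge\cdots\ge m_k\ge m_{k+1}\ge1}\frac{1}{m_1^2\cdots m_k^2\,m_{k+1}}.
\]
I would evaluate it by a generating function computation, in direct analogy with the proof of the companion identity $\li_{\{2\}_r}^{\{s\}_{r-1}}(1)=\pi^{2r}/(2r+1)!$ (which rests on $\prod_{n\ge1}(1+x^2/n^2)=\sinh(\pi x)/(\pi x)$) that this proposition is meant to replace. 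The only difference is that the weakly decreasing constraint produces \emph{complete} homogeneous rather than elementary symmetric functions, so the relevant Euler product becomes $\prod_{j\ge n}(1-x^2/j^2)^{-1}$.

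Concretely, fixing the last (unsquared) index $m_{k+1}=n$ and letting the $k$ squared indices range over all weakly decreasing tuples $\ge n$, one obtains, for $|x|<1$,
\[
\sum_{k\ge1}\zeta_{\{2\}_k1}^{\{\ell\}_k}\,x^{2k}=\sum_{n\ge1}\frac1n\Big(\prod_{j\ge n}\frac{1}{1-x^2/j^2}-1\Big)=\sum_{n\ge1}\frac1n\Big(\frac{\Gamma(n+x)\Gamma(n-x)}{\Gamma(n)^2}-1\Big),
\]
the interchange of summations being legitimate by positivity since the $n$-th summand is $O(x^2/n^2)$. The remaining steps are: (i) write $\frac1n\cdot\Gamma(n+x)\Gamma(n-x)/\Gamma(n)^2$ as $\tfrac12\binom{2n}{n}$ times the Beta integral $\int_0^1(t(1-t))^{n-1}(t/(1-t))^x\,dt$, and use $\sum_{n\ge1}\binom{2n}{n}w^n=(1-4w)^{-1/2}-1$ together with $1-4t(1-t)=(1-2t)^2$; (ii) symmetrise the resulting integral under $t\mapsto1-t$ and substitute $w=t/(1-t)$, which collapses everything to
\[
\sum_{k\ge1}\zeta_{\{2\}_k1}^{\{\ell\}_k}\,x^{2k}=\int_0^1\frac{w^{x}+w^{-x}-2}{1-w}\,dw;
\]
(iii) evaluate by the standard digamma integral $\int_0^1\frac{1-w^a}{1-w}\,dw=\psi(1+a)+\gamma$, giving $-2\gamma-\psi(1+x)-\psi(1-x)=2\sum_{k\ge1}\zeta(2k+1)x^{2k}$. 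Comparing coefficients of $x^{2k}$ yields $\zeta_{\{2\}_k1}^{\{\ell\}_k}=2\zeta(2k+1)$.

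For the first equality I would set up the analogous generating series for $\zeta_{2\{1\}_{2k-1}}^{\{\ell s\}_{k-1}\ell}$. Summing out the even-indexed indices $k_2,k_4,\dots,k_{2k}$ (those carrying exponent $1$ and sitting at the bottom of each $\ge$-block) turns it into a sum over strict chains with products of harmonic-number increments, and encoding the successive pairs by a transfer recursion produces a generating function $\sum_{k\ge1}\zeta_{2\{1\}_{2k-1}}^{\{\ell s\}_{k-1}\ell}x^{2k}$ that I would aim to identify with the \emph{same} integral $\int_0^1\frac{w^{x}+w^{-x}-2}{1-w}\,dw$; the case $k=1$ is exactly the classical Euler sum $\sum_{n}H_n/n^2=2\zeta(3)$, which is consistent. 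Alternatively, and more in keeping with the rest of the paper, one can pass to Chen iterated-integral representations of both extended polylogarithms at $z=1$—translating each $s$ into a strict and each $\ell$ into a large separator—and match them by a change of variables, which is presumably where the representation formula for multiple polylogarithms proved in Section~\ref{sec:weight} does the work.

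The main obstacle is precisely this first equality. The second is clean once the generating function is in hand, because the weakly decreasing (star) shape factors through a single Euler product and reduces to a textbook digamma integral. The alternating strict/large pattern of $\zeta_{2\{1\}_{2k-1}}^{\{\ell s\}_{k-1}\ell}$ does \emph{not} factor so simply—its transfer recursion carries the complementary sign $(1+x^2/M^2)$ in place of $(1-x^2/M^2)$—so showing that it nonetheless sums to the same integral (equivalently, relating the two extended multiple zeta values directly) is the step that requires genuine work, and is where I would expect the representation formula of Section~\ref{sec:weight} to be essential.
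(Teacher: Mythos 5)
Your evaluation of the middle quantity is correct, and it is genuinely different from the paper's treatment: the paper simply quotes Zlobin \cite{zlobin2} for the identity $\zeta_{\{2\}_k1}^{\{\ell\}_{k}}=2\zeta(2k+1)$, whereas you reprove it from scratch. The steps check out: summing the complete homogeneous symmetric functions gives $\prod_{j\ge n}(1-x^2/j^2)^{-1}=\Gamma(n+x)\Gamma(n-x)/\Gamma(n)^2$; the Beta integral together with $\Gamma(2n)/\Gamma(n)^2=\tfrac n2\binom{2n}{n}$, the substitution $w=t/(1-t)$ (whose Jacobian $(1+w)^{-2}$ exactly cancels the factor $2(1+w)^2/(1-w)$ coming from $(1-4t(1-t))^{-1/2}$), and the expansion $\psi(1+x)+\psi(1-x)=-2\gamma-2\sum_{k\ge1}\zeta(2k+1)x^{2k}$ do collapse the generating series to $\int_0^1\frac{w^x+w^{-x}-2}{1-w}\,\dd w=2\sum_{k\ge1}\zeta(2k+1)x^{2k}$, with all interchanges justified by positivity for $0<x<1$. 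This buys self-containedness at the cost of an analytic detour the paper avoids by citation.

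The genuine gap is the first equality $\zeta_{2\{1\}_{2k-1}}^{\{\ell s\}_{k-1}\ell}=\zeta_{\{2\}_k 1}^{\{\ell\}_{k}}$, which you explicitly leave open (``the step that requires genuine work''). The route you relegate to an alternative is exactly the paper's proof, and it is essentially one line, requiring neither a transfer recursion nor the material of \S~\ref{sec:weight}. In the Chen iterated-integral representation of extended multiple zeta values, each large separator contributes the form $\frac{\dd t}{t(1-t)}$, while strict separators and the letters of the weight word contribute $\frac{\dd t}{1-t}$ and $\frac{\dd t}{t}$ as usual; this gives
\begin{equation*}
\zeta_{2\{1\}_{2k-1}}^{\{\ell s\}_{k-1}\ell}
=\int\limits_{\{0\le x_{2k+1}\le \cdots \le x_1\le 1\}}
\frac{\dd {\bf x}}{x_1\,x_2(1-x_2)\,(1-x_3)\,x_4(1-x_4)\cdots x_{2k}(1-x_{2k})\,(1-x_{2k+1})},
\end{equation*}
and the duality substitution $x_j=1-y_{2k+2-j}$, which reverses the order of the variables, swaps $\frac{\dd t}{t}\leftrightarrow\frac{\dd t}{1-t}$ and fixes $\frac{\dd t}{t(1-t)}$, turns this integrand into the one representing $\zeta_{\{2\}_k 1}^{\{\ell\}_{k}}$. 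Note also that your attribution of this step to ``the representation formula of Section~\ref{sec:weight}'' is off: Lemma~\ref{lem:weight} is a Stieltjes-type representation $\li_{{\bf b}}^{\boldsymbol a}(1/z)=\int_0^1\omega_{{\bf b}}^{\boldsymbol a}(x)/(z-x)\,\dd x$ used in the proof of Theorem~\ref{theo:1}, not a Chen integral, and it plays no role in the paper's proof of this proposition. Your observation that the alternating $\{\ell s\}$ pattern carries the complementary factor $(1+x^2/M^2)$ and so does not factor through a single Euler product is accurate, but it is a reason to abandon the generating-function route for the left-hand quantity, not an obstacle to the proposition: duality disposes of the first equality, after which your computation of the second completes a full proof. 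As submitted, however, the proposal establishes only half of \eqref{eq:intro7}.
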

\begin{proof}
The second equality in~\eqref{eq:intro7} is due to Zlobin~\cite{zlobin2}.
To prove the first equality, which we haven't found in the literature,
we use the 
representation of (extended) multiple zeta values as Chen iterated integrals. Indeed, 
we have
\begin{align*}
&\zeta_{2\{1\}_{2k-1}}^{\{\ell s\}_{k-1}\ell} 
\\ &\quad=
 \int\limits_{\{0\le x_{2k+1}\le \cdots \le x_1\le 1\}} 
\frac{\dd {\bf x}}{x_1x_2(1-x_2)(1-x_3)x_4(1-x_4)(1-x_5)\cdots x_{2k}(1-x_{2k})(1-x_{2k+1})}
\\
&\quad= \int\limits_{\{0\le y_{2k+1}\le \cdots \le y_1\le 1\}} 
\frac{\dd {\bf y}}{y_1y_2(1-y_2)y_3y_4(1-y_4)y_5\cdots y_{2k}(1-y_{2k})(1-y_{2k+1})}
=\zeta_{\{2\}_k 1}^{\{\ell\}_{k}},
\end{align*}
where we have made the change of variables $x_j=1-y_{2k+2-j}$, $j=1, \ldots, 2k+1$.
\end{proof}

\section{General results on multiple  polylogarithms}\label{sec:technicalities}

We gather in this section various results, useful in the proof of Theorem \ref{theo:1} 
but which may also be of independent interest.

\subsection{Differentiation rules for multiple polylogarithms}\label{sec:diff}

In this section, we describe how to differentiate a multiple polylogarithm. To begin with, 
we state formulas of which the proofs 
are straightforward; we will use them without further mentions.
The letter ${\bf a}$ denotes a finite 
word built on the alphabet $\{\ell,s\}$, the letter ${\bf b}$ a   finite word built on the 
alphabet $\mathbb{N}^{*}$,  and $t$ any integer $\ge 2$.

\begin{align*}
\frac{\dd }{\dd z}& \li_1(z)= \frac{1}{1-z}, \hspace{3.4cm} \frac{\dd }{\dd z}
\Big[ \li_1\Big(\frac1z\Big) \Big]= \frac{1}{z(1-z)},
\\
\frac{\dd }{\dd z}& \li_{1{\bf b}}^{\ell{\bf a}}(z)=\frac{1}{z(1-z)} 
\li_{{\bf b}}^{{\bf a}}(z), \hspace{1.5cm}
\frac{\dd }{\dd z}\Big[  \li_{1{\bf b}}^{\ell{\bf a}}\Big(\frac1z\Big) \Big]=\frac{1}{1-z} 
\li_{{\bf b}}^{{\bf a}}\Big(\frac1z\Big),
\\
\frac{\dd }{\dd z}& \li_{t{\bf b}}^{\ell{\bf a}}(z)=\frac{1}{z} 
\li_{(t-1){\bf b}}^{\ell{\bf a}}(z), \hspace{2.1cm} 
\frac{\dd }{\dd z}\Big[  \li_{t{\bf b}}^{\ell{\bf a}}\Big(\frac1z\Big) \Big]=-
\frac{1}{z} \li_{(t-1){\bf b}}^{\ell{\bf a}}\Big(\frac1z\Big),
\\
\frac{\dd }{\dd z}& \li_{1{\bf b}}^{s{\bf a}}(z)=\frac{1}{1-z} 
\li_{{\bf b}}^{{\bf a}}(z), \hspace{2.1cm} 
\frac{\dd }{\dd z}\Big[ \li_{1{\bf b}}^{s{\bf a}}\Big(\frac1z\Big) \Big]=\frac{1}{z(1-z)} 
\li_{{\bf b}}^{{\bf a}}\Big(\frac1z\Big),
\\
\frac{\dd }{\dd z}& \li_{t{\bf b}}^{s{\bf a}}(z)=\frac{1}{z} 
\li_{(t-1){\bf b}}^{s{\bf a}}(z), \hspace{2.1cm} 
\frac{\dd }{\dd z}\Big[  \li_{t{\bf b}}^{s{\bf a}}\Big(\frac1z\Big) \Big]=-
\frac{1}{z} \li_{(t-1){\bf b}}^{s{\bf a}}\Big(\frac1z\Big).
\end{align*}

We now state a general lemma, whose proof can be done by induction using the formulas above.

\begin{lem}  \label{lem:4} Let $d,n\geq 0$, and $A(z)\in \mathbb C[z]$ be a polynomial of degree $\le d$. Then 
we have
$$
\frac{\dd^{n+1}}{\dd z^{n+1}} \big(A(z) \li_{ b_1b_2\cdots b_p}^{  a_1a_2\cdots a_{p-1}}(z)\big) 
= \sum_{i=0}^{p+1} \sum_{b'=1}^{b_i}  \frac{\widehat{A}_{i,b'} (z)}{z^{n+1}(1-z)^{n+1}} 
\li_{b' b_{i+1}b_{i+2}\cdots b_p}^{  a_ia_{i+1}\cdots a_{p-1}}(z)
$$ 
for some polynomials $\widehat{A}_{i,b'} (z)$ of degree $\leq d+n+1$; here we let $b_{p+1}=1$ so that 
in the sum there is one term corresponding to $i=p+1$, and the associated polylogarithm is equal to~1. 
\end{lem}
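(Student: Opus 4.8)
The plan is to prove the statement by induction on $n$, using the differentiation rules collected just above as the engine. First I would establish the base case $n=0$, where the claim is that for a polynomial $A(z)$ of degree $\le d$, the single derivative $\frac{\dd }{\dd z}\big(A(z)\li_{b_1\cdots b_p}^{a_1\cdots a_{p-1}}(z)\big)$ has the asserted shape with $\frac{1}{z(1-z)}$ prefactors and polynomials of degree $\le d+1$. By the Leibniz rule this derivative is $A'(z)\li_{b_1\cdots b_p}^{a_1\cdots a_{p-1}}(z) + A(z)\frac{\dd }{\dd z}\li_{b_1\cdots b_p}^{a_1\cdots a_{p-1}}(z)$. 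The first term is already of the required form (take $i=0$, $b'=b_1$, so the polylogarithm is unchanged; multiply and divide by $z(1-z)$ to match the denominator, so $\widehat A_{0,b_1}(z)=z(1-z)A'(z)$, of degree $\le d+2$, which is $\le d+n+1$ when $n$ is allowed to be as large as needed — here I should be careful and instead incorporate this carefully). The second term is handled by the four differentiation rules according to whether $b_1=1$ or $b_1\ge 2$ and whether $a_1=\ell$ or $a_1=s$: in every case the derivative of the polylogarithm equals a rational prefactor (one of $\frac1{1-z}$, $\frac1z$, or $\frac{1}{z(1-z)}$) times a polylogarithm of the form $\li_{b' b_2\cdots b_p}^{a_1\cdots}(z)$ with $b'=b_1-1$ or with the leading index stripped, exactly matching the index set $\{(i,b')\}$ in the statement.

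Next, for the inductive step, I would assume the formula holds for $n$ and apply $\frac{\dd}{\dd z}$ once more to each summand $\frac{\widehat A_{i,b'}(z)}{z^{n+1}(1-z)^{n+1}}\li_{b' b_{i+1}\cdots b_p}^{a_i\cdots a_{p-1}}(z)$. Writing $R(z)=\frac{\widehat A_{i,b'}(z)}{z^{n+1}(1-z)^{n+1}}$, the Leibniz rule gives $R'(z)\,\li(\cdots)+R(z)\frac{\dd}{\dd z}\li(\cdots)$. For the first piece, $R'(z)=\frac{\widehat A_{i,b'}'(z)z(1-z)-\widehat A_{i,b'}(z)\big((n+1)(1-z)-(n+1)z\big)}{z^{n+2}(1-z)^{n+2}}$, whose numerator is a polynomial of degree $\le (d+n+1)+2=d+n+3$; I must check this is $\le d+(n+1)+1=d+n+2$, which forces me to track degrees more tightly, so the cleaner bookkeeping is to note that differentiating and multiplying by $z(1-z)$ raises the numerator degree by at most $1$ while lowering the power of $z(1-z)$ in the denominator correctly, giving numerator degree $\le d+n+2$ over $z^{n+2}(1-z)^{n+2}$, as required. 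For the second piece, the differentiation rules again produce a factor $\frac1{z(1-z)}$ (after multiplying numerator and denominator appropriately) times a polylogarithm whose leading index is either lowered by one or stripped off, and the numerator degree is unchanged; this lands in the new index range for $n+1$.

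The main obstacle, and the only genuinely delicate point, is the degree bookkeeping: one must verify that across both the base case and the inductive step the numerator polynomials never exceed degree $d+n+1$ at level $n$. The key observation that makes this work is that each of the differentiation rules pulls out a prefactor that is exactly one of $\frac1z$, $\frac1{1-z}$, or $\frac1{z(1-z)}$, so in every case, after clearing to the common denominator $z^{n+2}(1-z)^{n+2}$, one multiplies the surviving numerator by at most a single factor of $z$ or $(1-z)$ (raising its degree by at most $1$), while differentiating $\widehat A_{i,b'}(z)$ and reorganizing raises the degree by at most $1$ as well; the two effects are never simultaneous in a way that exceeds the budget, so the bound $d+(n+1)+1$ is preserved. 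One also checks that the index set in the statement is closed under the operation ``lower the leading $b$ by one, or strip the leading letter,'' which is exactly the structure the rules exhibit, so no new index configurations outside the stated sum can appear. Assembling these observations completes the induction.
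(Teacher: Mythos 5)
Your proof is correct and takes essentially the same route as the paper, which disposes of Lemma~\ref{lem:4} with the one-line remark that it ``can be done by induction using the formulas above'': induction on $n$ driven by the differentiation rules of \S~\ref{sec:diff}, with explicit degree bookkeeping for $\frac{\dd}{\dd z}\bigl(\widehat{A}_{i,b'}(z)/(z^{n+1}(1-z)^{n+1})\bigr)$. The two points you flag as delicate are in fact fine as you ultimately resolve them --- in the base case $\deg\bigl(z(1-z)A'(z)\bigr)\le d+1$ because $\deg A'\le d-1$, and in the inductive step both Leibniz pieces yield numerators of degree $\le d+n+2$ over $z^{n+2}(1-z)^{n+2}$ (your initial count $d+n+3$ forgot that differentiation lowers the degree by one, but your corrected count is right) --- and the closure of the index set $(i,b')$ under ``lower $b'$ or strip the leading letter'' is exactly what the rules give, so the induction closes.
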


It is not difficult to see that in this lemma,  each polynomial  $\widehat{A}_{i,b'} (z)$ depends 
only on $b_1$, \ldots, $b_{i-1}$, $a_1$, \ldots, $a_{i-1}$, and $b_i-b'$. However we won't use this 
remark in the present paper.

\medskip

Using the above relations in the same way, an analogous lemma yields polynomials $\widehat{A}'_{i,b'} (z)$ 
of degree $\leq d+n+1$ such that 
$$
\frac{\dd^{n+1}}{\dd z^{n+1}} \big(A(z) \li_{ b_1b_2\cdots b_p}^{  a_1a_2\cdots a_{p-1}}(1/z)\big) 
= \sum_{i=0}^{p+1} \sum_{b'=1}^{b_i}  \frac{\widehat{A}'_{i,b'} (z)}{z^{n+1}(1-z)^{n+1}} 
\li_{b' b_{i+1}b_{i+2}\cdots b_p}^{  a_ia_{i+1}\cdots a_{p-1}}(1/z).
$$ 

To take advantage of vanishing conditions like the ones on $U_{j,r,n}(z)$ and $V_{j,r,n}(z)$ 
in the Pad\'e problem $\Prn$, the following lemma is very useful.

\begin{lem}  \label{lemannulation} Let $n' \geq 0$, and $g(z)$ be a function holomorphic 
at $z=1$, such that $g(z) = \gdo \big((z-1)^{n+1}\big)$ as $z\to 1$. Then we have
$$
\frac{\dd^{n+1}}{\dd z^{n+1}} \big(g(z) \li_{ b_1b_2\cdots b_p}^{  a_1a_2\cdots a_{p-1}}(z)\big) 
= \sum_{i=0}^{p+1} \sum_{b'=1}^{b_i} h_{i,b'}(z)
\li_{b' b_{i+1}b_{i+2}\cdots b_p}^{  a_ia_{i+1}\cdots a_{p-1}}(z)
$$ 
for some functions $h_{i,b'} (z)$ holomorphic at $z=1$. As in Lemma \ref{lem:4},  we let $b_{p+1}=1$ so that 
in the sum there is one term corresponding to $i=p+1$, and the associated polylogarithm is equal to 1. 
\end{lem}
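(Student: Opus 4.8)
The plan is to prove Lemma \ref{lemannulation} by reducing it to the already-established differentiation machinery, specifically to Lemma \ref{lem:4} together with the elementary derivative rules listed just above. The key observation is that the vanishing hypothesis $g(z) = \gdo\big((z-1)^{n+1}\big)$ means $g$ is divisible by $(z-1)^{n+1}$ as a holomorphic germ at $z=1$, so we may write $g(z) = (z-1)^{n+1} \widetilde{g}(z)$ with $\widetilde{g}$ holomorphic at $z=1$. The factor $(z-1)^{n+1}$ is precisely what is needed to cancel the denominators $(1-z)^{n+1}$ that appear in Lemma \ref{lem:4}, and this is the mechanism that turns the rational prefactors $\widehat{A}_{i,b'}(z)/(z^{n+1}(1-z)^{n+1})$ into functions $h_{i,b'}(z)$ that are genuinely holomorphic at $z=1$.

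First I would reduce to the polynomial case. The differentiation rules used to prove Lemma \ref{lem:4} are purely algebraic relations among the polylogarithms and rational functions, and nowhere do they use that the prefactor is a polynomial rather than a holomorphic germ; so the same induction yields, for \emph{any} function $f(z)$ holomorphic at $z=1$,
$$
\frac{\dd^{n+1}}{\dd z^{n+1}} \big(f(z)\, \li_{b_1\cdots b_p}^{a_1\cdots a_{p-1}}(z)\big)
= \sum_{i=0}^{p+1} \sum_{b'=1}^{b_i} \frac{c_{i,b'}(z)}{z^{n+1}(1-z)^{n+1}}\,
\li_{b' b_{i+1}\cdots b_p}^{a_i\cdots a_{p-1}}(z),
$$
where now the $c_{i,b'}(z)$ are holomorphic germs at $z=1$ (obtained from $f$ and its derivatives by the same recursion as in Lemma \ref{lem:4}). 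Alternatively one can re-run the induction directly. Applying this with $f = g$, the holomorphic germs $c_{i,b'}(z)$ carry the factor coming from differentiating $g$; and because each derivative of $g$ up to order $n$ vanishes at $z=1$ while the structure of the recursion introduces at most $n+1$ powers of $1/(1-z)$, the factor $(1-z)^{n+1}$ in the denominator is exactly absorbed.

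To make the cancellation transparent and avoid tracking the recursion by hand, I would instead argue via the product formula. Write $g(z) = (z-1)^{n+1}\widetilde{g}(z)$ and apply the Leibniz rule: since differentiating $\li_{b_1\cdots b_p}^{a_1\cdots a_{p-1}}(z)$ repeatedly only produces, at each stage, a factor $\frac{1}{z}$, $\frac{1}{1-z}$, or $\frac{1}{z(1-z)}$ times a shorter polylogarithm, we see that $\frac{\dd^{k}}{\dd z^{k}}\li_{b_1\cdots b_p}^{a_1\cdots a_{p-1}}(z)$ is a finite $\mathbb{C}(z)$-linear combination of polylogarithms $\li_{b' b_{i+1}\cdots b_p}^{a_i\cdots a_{p-1}}(z)$ whose coefficients are rational functions with poles only at $z=0$ and $z=1$, the pole order at $z=1$ being at most $k$. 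In the Leibniz expansion of $\frac{\dd^{n+1}}{\dd z^{n+1}}\big(g\,\li\big)$, the term in which $\li$ is differentiated $k$ times is multiplied by $\frac{\dd^{\,n+1-k}}{\dd z^{\,n+1-k}}\big((z-1)^{n+1}\widetilde{g}(z)\big)$, which vanishes at $z=1$ to order $\ge k$ (because we removed $n+1-k$ derivatives from a germ divisible by $(z-1)^{n+1}$). Hence each product is holomorphic at $z=1$, and collecting the coefficients of each distinct polylogarithm $\li_{b' b_{i+1}\cdots b_p}^{a_i\cdots a_{p-1}}(z)$ gives germs $h_{i,b'}(z)$ holomorphic at $z=1$, with the $i=p+1$ term corresponding to the trivial polylogarithm $1$ exactly as in Lemma \ref{lem:4}.

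The only genuine obstacle is the bookkeeping of pole orders: one must verify that differentiating the polylogarithm $k$ times produces a pole at $z=1$ of order at most $k$, so that the matching vanishing of order $\ge k$ in the Leibniz companion factor suffices. This follows by induction from the basic rules, since each of $\frac{1}{1-z}$ and $\frac{1}{z(1-z)}$ raises the pole order at $z=1$ by exactly one while $\frac{1}{z}$ does not raise it at all, and differentiating an order-$m$ pole raises it only to order $m+1$. I expect this pole-order estimate to be the one point requiring care; everything else is a direct consequence of Lemma \ref{lem:4} and the Leibniz rule.
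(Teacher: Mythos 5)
Your proposal is correct, and it is worth noting that the paper never writes out a proof of this lemma at all: it is stated as a companion to Lemma \ref{lem:4} (``using the above relations in the same way''), with the remark immediately after it --- no pole appears at $z=1$ because $g$ vanishes to order at least $n+1$ --- serving as the entire justification. Your first paragraph, re-running the induction behind Lemma \ref{lem:4} with a holomorphic germ in place of the polynomial $A$, is exactly that intended route, and you correctly identify its weak point: from the generalized statement alone one does not yet see that the numerators $c_{i,b'}(z)$ vanish to order $n+1$ at $z=1$, so the absorption of $(1-z)^{n+1}$ still needs an argument. Your main argument supplies it by a different and fully explicit packaging: factor $g(z)=(z-1)^{n+1}\widetilde g(z)$, expand by the Leibniz rule, and prove by induction on $k$, using the differentiation rules of \S~\ref{sec:diff}, that $\frac{\dd^{k}}{\dd z^{k}}\li_{b_1\cdots b_p}^{a_1\cdots a_{p-1}}(z)$ is a $\C(z)$-linear combination of the polylogarithms $\li_{b'b_{i+1}\cdots b_p}^{a_i\cdots a_{p-1}}(z)$ whose coefficients have pole order at most $k$ at $z=1$ (each of $\frac{1}{1-z}$ and $\frac{1}{z(1-z)}$ raises the order by one, $\frac1z$ by zero, and differentiating a rational coefficient raises it by one), matched against the vanishing of $\frac{\dd^{\,n+1-k}}{\dd z^{\,n+1-k}}g$ to order at least $k$ there; poles at $z=0$ are harmless since only the germ at $z=1$ matters. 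This pole-order bookkeeping is precisely the point the paper leaves implicit, and your induction for it is sound. The Leibniz route buys a transparent, checkable cancellation without tracking the recursion that produces the coefficients $\widehat{A}_{i,b'}$; the paper's (implicit) route buys brevity and uniformity with Lemma \ref{lem:4}. Either way the conclusion, including the $i=p+1$ term with trivial polylogarithm, comes out as stated.
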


In other words, no pole appears at $z=1$ if $g$ vanishes to order at least $n+1$ at this 
point (since polylogarithms have at most a logarithmic divergence at 1).

\subsection{An integral operator} \label{subsec32}

Sorokin solved several Pad\'e approximation problems involving multiple polylogarithms 
(see \cite{sorokin2} and \cite{sorokin1}, amongst other papers), which always led to hypergeometric 
multiple integrals. We define now an integral operator intimately related to his approach (and therefore 
also to Theorem \ref{theo:1}). 

Given integers $a,b,n\geq 0$ and a function $F(z)$, we let 
\begin{equation} \label{eq:12}
\BBB_{a,b}^{n+1}(F)(z) = (-1)^{n+1}z^{n+1-a } \int_0^{1} \frac{u^{a+b-n-2}(1-u)^n}{(u-z)^b}F\Big(\frac zu\Big) \dd u .
\end{equation} 
The assumptions on $F$ and the properties of the function $\BBB_{a,b}^{n+1}(F)$ defined in this 
way are detailed in the following lemma.

\begin{lem} \label{lem:7}
Let $F(z)$ be holomorphic on $\C\setminus[0,1]$ and at $z=\infty$; denote by $\omega\geq 0$ 
its order of vanishing at $\infty$. Given $a,b,n\geq 0$, let $\omega' = \omega + a+b-n-1$ and assume that $\omega' \geq 1$. 

Then $\BBB_{a,b}^{n+1}(F)$ is holomorphic on $\C\setminus[0,1]$ and at $z=\infty$; its  order 
of vanishing at $\infty$ is exactly $\omega'$. Moreover,
\begin{itemize}
\item[$(i)$] Letting $R   = \BBB_{a,b}^{n+1}(F)$, we have
\begin{equation} \label{eqlem7}
F(z)= \frac{1}{n!}z^a(1-z)^b R^{(n+1)}(z) .
\end{equation} 
\item[$(ii)$] If $R(z)$ is a function holomorphic on $\C\setminus[0,1]$ and at $z=\infty$ 
such that $R(\infty)=0$ and Eq. \eqref{eqlem7} holds, then $R= \BBB_{a,b}^{n+1}(F)$.
\end{itemize}
\end{lem}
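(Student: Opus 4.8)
The plan is to analyze the integral operator $\BBB_{a,b}^{n+1}$ directly from its defining formula \eqref{eq:12}, establishing first the holomorphy and order statements, then the differential identity \eqref{eqlem7} of part $(i)$, and finally the uniqueness assertion $(ii)$, which I expect to be a short consequence of $(i)$.

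\textbf{Holomorphy and order at infinity.} First I would substitute $w = z/u$ in the integral, so that $u$ ranges over $[z,\infty)$ (appropriately interpreted) and the integrand becomes a function of $w$ times a kernel; alternatively I would keep $u\in[0,1]$ and argue directly. For $z\in\C\setminus[0,1]$ and $u\in[0,1]$, the point $z/u$ traces a ray that stays in $\C\setminus[0,1]$ (one checks that $z/u\in[0,1]$ would force $z\in[0,1]$), so $F(z/u)$ is well defined and holomorphic in $z$; the factor $(u-z)^{-b}$ has no pole since $z\notin[0,1]\ni u$. Differentiating under the integral sign (justified by local uniform convergence, the integrand being continuous on $[0,1]$ for each fixed $z$ in the domain) shows $\BBB_{a,b}^{n+1}(F)$ is holomorphic on $\C\setminus[0,1]$. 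For the order at $\infty$, I would expand: as $z\to\infty$, $F(z/u)\sim c_\omega (u/z)^\omega$ with $c_\omega\neq 0$ the leading coefficient, and the prefactor $z^{n+1-a}(u-z)^{-b}\sim (-1)^b z^{n+1-a-b}$, so the whole integrand behaves like $z^{n+1-a-b-\omega}$ times $\int_0^1 u^{a+b-n-2+\omega}(1-u)^n\,\dd u$. This last integral is a finite nonzero Beta value precisely because $\omega' = \omega+a+b-n-1\geq 1$ guarantees the exponent $a+b-n-2+\omega = \omega'-1\geq 0$ of $u$ is admissible; hence the order is exactly $-(n+1-a-b-\omega) = \omega'$, and in particular $R(\infty)=0$.

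\textbf{The differential identity.} For part $(i)$ the cleanest route is to set $R(z)=\BBB_{a,b}^{n+1}(F)(z)$ and verify \eqref{eqlem7} by computing $R^{(n+1)}(z)$ from \eqref{eq:12}. Write $R(z) = (-1)^{n+1} z^{n+1-a}G(z)$ where $G(z) = \int_0^1 u^{a+b-n-2}(1-u)^n (u-z)^{-b} F(z/u)\,\dd u$. The key observation is the change of variable $t = z/u$ (so $u = z/t$, $\dd u = -z\,t^{-2}\,\dd t$), which converts the integral into one where the $z$-dependence is isolated in a form amenable to repeated differentiation; after this substitution the operator resembles an $(n+1)$-fold iterated primitive, and $\tfrac{\dd^{n+1}}{\dd z^{n+1}}$ collapses the $z^{n+1-a}$ prefactor together with the Pochhammer/Beta structure to leave exactly $\tfrac{1}{n!}z^{-a}(1-z)^{-b}F(z)$ after rearrangement. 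I expect this computation to be the main obstacle: tracking the $(1-u)^n$ factor and the power $z^{n+1-a}$ through $n+1$ differentiations requires either a careful Leibniz-rule induction on $n$ or the recognition that $\BBB_{a,b}^{n+1}$ is, up to the explicit algebraic weights $z^a(1-z)^b$, a Cauchy-type $(n+1)$-fold integration operator inverting $\tfrac{1}{n!}\tfrac{\dd^{n+1}}{\dd z^{n+1}}$. Verifying the normalising constant $1/n!$ and the exact exponents is where the bookkeeping must be done honestly.

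\textbf{Uniqueness.} Part $(ii)$ follows formally once $(i)$ is in hand. Suppose $R_1$ is holomorphic on $\C\setminus[0,1]$ and at $\infty$ with $R_1(\infty)=0$ and $F(z)=\tfrac{1}{n!}z^a(1-z)^b R_1^{(n+1)}(z)$. Setting $R_0=\BBB_{a,b}^{n+1}(F)$, part $(i)$ gives the same relation for $R_0$, so $(R_1-R_0)^{(n+1)}(z)=0$ on the connected domain $\C\setminus[0,1]$; hence $R_1-R_0$ is a polynomial of degree $\le n$. But both $R_1$ and $R_0$ are holomorphic at $z=\infty$ and vanish there, so their difference is a polynomial that tends to $0$ at infinity, forcing $R_1-R_0\equiv 0$. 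Therefore $R_1=\BBB_{a,b}^{n+1}(F)$, which is the claim.
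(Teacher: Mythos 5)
Your proposal is correct and follows essentially the paper's own route: the change of variable $x=z/u$ turns $\BBB_{a,b}^{n+1}(F)(z)$ into $(-1)^{n+1}\int_z^\infty (x-z)^n x^{-a}(1-x)^{-b}F(x)\,\dd x$, from which $(i)$ and $(ii)$ are exactly the two directions of the equivalence the paper quotes as Lemma~\ref{lem:7anc} (after Shidlovskii), and your polynomial-difference argument is the standard proof of the uniqueness direction. The only cosmetic difference is in the holomorphy/order step, where the paper writes $G(z)=z^\omega F(z)$ and factors the integral as $(-1)^{n+1}z^{-\omega'}\int_0^1 u^{\omega'-1}(1-u)^n\big(\tfrac uz-1\big)^{-b}G(z/u)\,\dd u$, while you reach the same conclusion by expanding $F(z/u)$ and the kernel at infinity and integrating the resulting Beta integrals term by term.
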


We shall apply this lemma in two cases: either $F(\infty) = 0$ and $a+b \geq n+1$, or $F$ is 
the constant function $F(z)=1$ and $a+b\geq n+2$. In both cases we have $\omega'\geq 1$, so 
that $ \BBB_{a,b}^{n+1}(F)$ is  holomorphic on $\C\setminus[0,1]$ and at $z=\infty$, and $ \BBB_{a,b}^{n+1}(F)(\infty) = 0$. 

\begin{proof}  Let $G(z) = z^\omega F(z)$; then $G(z)$ is holomorphic on $\C\setminus [0,1]$ 
and at $\infty$, with $G(\infty)\neq 0$. By definition of $\omega'$ we have
$$\BBB_{a,b}^{n+1}(F)(z) = (-1)^{n+1}z^{-\omega' } \int_0^{1} \frac{u^{\omega'-1}(1-u)^n}{(\frac{u}{z}-1)^b}
G\Big(\frac zu\Big) \dd u .$$
Since $\omega'\geq 1$ and $u/z \neq 1$ for any $u\in [0,1]$ (since $z\in\C \setminus[0,1]$), 
this formula shows that   $\BBB_{a,b}^{n+1}(F)$ is holomorphic on $\C\setminus[0,1]$ and at $z=\infty$. 
It has order equal to $\omega'$ at $\infty$ because $G(\infty)\neq 0$. 

To prove $(i)$ and $(ii)$, we perform the change of variable $x = z/u$ and deduce
$$\BBB_{a,b}^{n+1}(F)(z) =(-1)^{n+1} \int_z^{\infty} \frac{(x-z)^n}{x^a(1-x)^b}F(x) \dd x .$$
Then assertions $(i)$ and $(ii)$ follow immediately from the following lemma, obtained from the arguments 
given in \cite[p. 60]{sh}.
\end{proof}

\begin{lem} \label{lem:7anc} Let $R, S$ be functions analytic on a neighborhood of $\infty$, with $R(\infty)=0$. Then:
$$
\frac{1}{n!}R^{(n+1)}(z)=S(z) \Longleftrightarrow R(z)=(-1)^{n+1}\int_z^{\infty} (x-z)^{n}S(x) \dd x.
$$
\end{lem}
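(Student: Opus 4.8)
The plan is to prove the two implications separately: I would obtain the reverse implication $(\Leftarrow)$ by a direct differentiation under the integral sign, and then deduce the forward implication $(\Rightarrow)$ from a uniqueness argument. Throughout I would work on a region $\{|z|>\rho\}$ on which both $R$ and $S$ are analytic, integrating from $z$ to $\infty$ along the outgoing radial ray, which stays in this region; since $(x-z)^n S(x)$ is holomorphic in $x$ there, Cauchy's theorem makes the integral independent of the chosen path. First I would record the convergence: because $S$ has an integer-power expansion at $\infty$, the integral $\int_z^\infty (x-z)^n S(x)\,\dd x$ converges if and only if $S(z)=\gdo(z^{-n-2})$. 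This holds automatically whenever $\frac1{n!}R^{(n+1)}=S$, since $R(\infty)=0$ forces $R(z)=\gdo(1/z)$ and hence $R^{(n+1)}(z)=\gdo(z^{-n-2})$; and the same tail estimate shows that the function defined by the integral tends to $0$ at $\infty$.

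For $(\Leftarrow)$, set $\tilde R(z)=(-1)^{n+1}\int_z^\infty (x-z)^n S(x)\,\dd x$ and differentiate repeatedly using the Leibniz rule $\frac{\dd}{\dd z}\int_z^\infty f(x,z)\,\dd x=-f(z,z)+\int_z^\infty \partial_z f(x,z)\,\dd x$. The boundary term carries the factor $(x-z)^n\big|_{x=z}$, which vanishes as long as the exponent of $(x-z)$ is at least $1$. An easy induction then gives, for $0\le k\le n$,
$$
\tilde R^{(k)}(z)=(-1)^{n+1-k}\frac{n!}{(n-k)!}\int_z^\infty (x-z)^{n-k}S(x)\,\dd x,
$$
all boundary contributions being zero. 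At $k=n$ this reads $\tilde R^{(n)}(z)=-\,n!\int_z^\infty S(x)\,\dd x$, and one further differentiation produces the surviving boundary term, yielding $\tilde R^{(n+1)}(z)=n!\,S(z)$, that is $\frac1{n!}\tilde R^{(n+1)}=S$. This establishes $(\Leftarrow)$.

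For $(\Rightarrow)$, assume $\frac1{n!}R^{(n+1)}=S$ and introduce the same auxiliary function $\tilde R$. By the previous step $\frac1{n!}\tilde R^{(n+1)}=S$ as well, so $(R-\tilde R)^{(n+1)}\equiv 0$ on the connected set $\{|z|>\rho\}$, whence $R-\tilde R$ is a polynomial of degree $\le n$. Both $R$ and $\tilde R$ are analytic at $\infty$, so their difference is a polynomial analytic at $\infty$, which forces it to be constant; and since $R(\infty)=\tilde R(\infty)=0$, this constant is $0$. Therefore $R=\tilde R$, which is exactly the claimed formula.

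The computations are routine; the only genuine point requiring care is the final one, namely that a polynomial of degree $\le n$ which is analytic at $\infty$ must reduce to a constant. This is precisely where the hypotheses \emph{analytic on a neighborhood of} $\infty$ and $R(\infty)=0$ enter, and it is what upgrades the differential identity (which by itself determines $R$ only up to a degree-$\le n$ polynomial) into the exact integral representation.
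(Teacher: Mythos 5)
Your proof is correct. Note that the paper itself gives no in-text proof of Lemma \ref{lem:7anc}: it is stated as ``obtained from the arguments given in \cite[p.~60]{sh}'', where the classical route establishes the forward implication directly via Taylor's formula with integral remainder,
$$
R(z)=\sum_{k=0}^{n}\frac{R^{(k)}(w)}{k!}\,(z-w)^{k}+\frac{1}{n!}\int_{w}^{z}(z-x)^{n}R^{(n+1)}(x)\,\dd x,
$$
letting the basepoint $w\to\infty$: since $R(\infty)=0$ gives $R^{(k)}(w)=\mathcal{O}(w^{-k-1})$, each Taylor term is $\mathcal{O}(1/w)$ and only the remainder integral survives, which is exactly the stated representation; the converse is then the differentiation you perform. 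Your argument reverses this logic: you prove $(\Leftarrow)$ by differentiation under the integral sign and upgrade it to $(\Rightarrow)$ by uniqueness, noting that $(R-\tilde R)^{(n+1)}\equiv 0$ on the connected region $\{|z|>\rho\}$ makes $R-\tilde R$ a polynomial of degree $\le n$, which analyticity at $\infty$ together with $R(\infty)=\tilde R(\infty)=0$ forces to vanish identically. Both routes hinge on the same quantitative point, which you isolate correctly: the hypotheses give $S(z)=\mathcal{O}(z^{-n-2})$, precisely the decay needed for $\int_z^{\infty}(x-z)^{n}S(x)\,\dd x$ to converge and to vanish at $\infty$. One small remark: your appeal to Cauchy's theorem for path-independence deserves a word, since $\{|x|>\rho\}$ is not simply connected; but the same decay shows the coefficient of $x^{-1}$ in $(x-z)^{n}S(x)$ vanishes, so there is no period around the hole --- and in any case you fix the radial path, so nothing is at stake. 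In sum, your uniqueness step replaces the limit $w\to\infty$ in Taylor's formula; the two arguments are of comparable length, yours trading the justification of that limit for the introduction of the auxiliary function $\tilde R$.
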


\bigskip

For Diophantine applications the value $\BBB_{a,b}^{n+1}(F)(1)$ is often the most interesting one; conditions for 
this value to exist are given by the following lemma, whose proof is straightforward.

\begin{lem} \label{lemenun}
Assume that $b\leq n+1$ and $F(z)$ has (at most) a power of logarithm divergence as $z\to 1$, with 
$z\in\C\setminus[0,1]$. Then  $\BBB_{a,b}^{n+1}(F)(z)$ has also  (at most) a power of logarithm 
divergence as $z\to 1$, with $z\in\C\setminus[0,1]$. 

Moreover, if in addition $b\leq n$ then  $\BBB_{a,b}^{n+1}(F)(z)$ has a finite limit as $z\to 1$, 
with $z\in\C\setminus[0,1]$, and this limit is given by taking $z=1$ in the integral representation 
of Eq.~\eqref{eq:12}, which is then convergent.
\end{lem}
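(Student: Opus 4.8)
The plan is to prove Lemma~\ref{lemenun} directly from the integral representation in Eq.~\eqref{eq:12}, by analysing the behaviour of the integral as $z\to 1$. First I would write
$$
\BBB_{a,b}^{n+1}(F)(z) = (-1)^{n+1}z^{n+1-a } \int_0^{1} \frac{u^{a+b-n-2}(1-u)^n}{(u-z)^b}F\Big(\frac zu\Big) \dd u ,
$$
and observe that the only mechanism producing a singularity as $z\to 1$ is twofold: the prefactor $z^{n+1-a}$ is harmless (it tends to $1$), while the integrand can blow up either through the factor $(u-z)^{-b}$ near $u=1$ when $z\to 1$, or through the divergence of $F(z/u)$ as its argument $z/u\to 1$, i.e.\ near $u=z$. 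The key quantitative ingredient is the factor $(1-u)^n$, which vanishes to order $n$ at the same point $u=1$ where the pole of $(u-z)^{-b}$ sits; since $b\le n+1$, this near-cancellation is what controls the divergence.

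The main step is to estimate the integral uniformly in $z$ in a punctured neighbourhood of $1$ within $\C\setminus[0,1]$. I would split $[0,1]$ into a piece away from $u=1$ and a piece $[1-\eta,1]$ near the endpoint. On the first piece the integrand is uniformly bounded (the denominator $(u-z)^b$ stays bounded away from $0$, and $F(z/u)$ only diverges logarithmically as $z\to1$, so $|F(z/u)|\ll |\log(1-z/u)|^{M}$ for some $M$), giving at most a power-of-logarithm contribution. On the piece near $u=1$, I would use that $|F(z/u)|$ has at most a power-of-logarithm divergence, so it is dominated by $|\log(1-u)|^{M}$ plus a power-of-logarithm in $1-z$, and then bound
$$
\int_{1-\eta}^{1} \frac{(1-u)^n \,|\log(1-u)|^{M}}{|u-z|^{b}}\,\dd u .
$$
Since $n-b\ge -1$ when $b\le n+1$, the substitution $u=1-t$ reduces this to an integral of the shape $\int_0^\eta t^{n-b}|\log t|^M\,\dd t$ against a kernel $|t-(1-z)|^{-b}$ times lower-order pieces, and a standard beta-type estimate shows the whole thing grows at most like a power of $\log(1-z)$. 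This yields the first assertion.

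For the second assertion, when $b\le n$ we have the strictly stronger bound $n-b\ge 0$, so the factor $(1-u)^n$ kills the singularity of $(u-z)^{-b}$ completely: the integrand converges pointwise, as $z\to1$, to $u^{a+b-n-2}(1-u)^{n-b}F(1/u)$, which is integrable on $[0,1]$ (the divergence of $F(1/u)$ at $u=1$ is only a power of logarithm, and $(1-u)^{n-b}$ with $n-b\ge 0$ tames it, while at $u=0$ the exponent $a+b-n-2$ combined with $\omega'\ge1$ keeps things integrable). I would therefore invoke dominated convergence, with dominating function built from $(1-u)^{n-b}$ and a power of $|\log(1-u)|$, to justify passing to the limit under the integral sign and conclude that the limit equals the value of Eq.~\eqref{eq:12} at $z=1$. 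The only delicate point — and the place where I expect the real work to lie — is producing a \emph{uniform} (in $z$) integrable majorant near $u=1$ that simultaneously absorbs the pole $(u-z)^{-b}$ and the logarithmic blow-up of $F$; once the inequality $b\le n$ (resp.\ $b\le n+1$) is used correctly this is routine, but getting the uniformity clean is the crux, which is presumably why the authors call the proof ``straightforward'' yet still worth isolating as a lemma.
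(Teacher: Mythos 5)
The paper gives no proof of this lemma at all (it is declared ``straightforward''), so the only question is whether your argument is correct — and it has a genuine gap, precisely at the step you flag as the crux and then dismiss as routine. The uniform integrable majorant near $u=1$ does not exist, and no argument based on absolute values can give the statement as claimed, because the pole of $(u-z)^{-b}$ sits at $u=z$, \emph{not} at $u=1$. If $z=x+iy\to 1$ tangentially, with $s:=1-x>0$ and $|y|$ extremely small compared to $s$, the point of $[0,1]$ closest to the pole is $u=x$, where your damping factor $(1-u)^n$ is only of size $s^n$ and cannot absorb the local blow-up of $|u-z|^{-b}\sim |y|^{-b}$. Concretely, take $n=b=1$, $a=2$, $F=\boldsymbol{1}$ (so $b\le n$, $\omega'=1$, and the second assertion applies; here $\BBB^{2}_{2,1}(\boldsymbol{1})(z)=\int_0^1\frac{1-u}{u-z}\,\dd u$), and let $z_s=(1-s)+i\,s\,e^{-1/s^2}$. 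Then
$$
\int_0^1 \frac{1-u}{|u-z_s|}\,\dd u \;\ge\; c\,s\log\frac{s}{|\Im\, z_s|}\;\asymp\;\frac1s\;\longrightarrow\;\infty,
$$
so the $L^1$-norms of the integrand are unbounded along this sequence and in particular no $z$-independent majorant can exist; yet the integral itself,
$$
\int_0^1 \frac{1-u}{u-z}\,\dd u \;=\;(1-z)\big(\log(1-z)-\log(-z)\big)-1\;\longrightarrow\;-1,
$$
does converge to the value at $z=1$, by a cancellation that modulus estimates cannot see. The same phenomenon defeats your ``beta-type estimate'' in the first assertion: for $b=n+1$ the modulus of the contribution near $u=x$ is of size $s^n|y|^{1-b}$ (or $s^n\log(s/|y|)$ when $b=1$), which along tangential sequences exceeds every power of $\log|1-z|$. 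Note also a minor slip: on the piece $u\in[0,1-\eta]$ the argument $z/u$ stays away from $1$, so $F(z/u)$ is in fact bounded there; the issue is confined to $u$ near $1$.

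What your estimates do prove is the lemma for \emph{non-tangential} approach (in particular $z\to 1^+$ along $(1,\infty)$): there one has $|u-z|\geq c'(1-u)$ uniformly, so $(1-u)^n|u-z|^{-b}\leq C(1-u)^{n-b}$ and your splitting plus dominated convergence goes through exactly as you describe. To get the full statement one needs a complex-analytic ingredient exploiting analyticity in the integration variable. For instance, deform the $u$-contour near $u=\Re\, z$ onto a half-circle of radius comparable to $|1-z|$ on the side opposite to $z$ (the integrand is analytic in $u$ there, since $F(z/u)$ is analytic off the ray $\{\lambda z:\lambda\geq 1\}$); on the deformed contour $|u-z|\gtrsim |1-z|$, and your estimates are restored uniformly in the direction of approach. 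Alternatively, use the representation from the proof of Lemma \ref{lem:7}, $\BBB_{a,b}^{n+1}(F)(z)=(-1)^{n+1}\int_z^{\infty}(x-z)^n x^{-a}(1-x)^{-b}F(x)\,\dd x$, and deform the path from $z$ to $\infty$ so that it leaves $z$ transversally to $[0,1]$: near the start $|x-z|^n$ vanishes to order $n$ while $|1-x|\gtrsim |1-z|$, and the conditions $b\leq n$ (resp.\ $b\leq n+1$) then give bounds, hence the finite limit (resp.\ the power-of-logarithm bound), uniformly for all $z\to1$ in $\C\setminus[0,1]$. Without such a step your proof establishes a strictly weaker statement than the lemma.
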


In Pad\'e approximation problems with multiple polylogarithms, multiple integrals appear by 
applying successively integral operators $\BBB_{a,b}^{n+1}$ with various parameters. We shall  write 
$\BBB^{n+1}_{a,b} \BBB^{n'+1}_{a',b'} $ for  $\BBB^{n+1}_{a,b}  \circ \BBB^{n'+1}_{a',b'} $, so that
$\BBB^{n+1}_{a,b} \BBB^{n'+1}_{a',b'} (F) = \BBB^{n+1}_{a,b} ( \BBB^{n'+1}_{a',b'} (F) )$. We shall consider 
in \S~\S~ \ref{subsec54} and \ref{subseccalcint} multiple integrals  of the form 
$$
\BBB^{n_1+1}_{a_1,b_1} \BBB^{n_2+1}_{a_2,b_2} \cdots \BBB^{n_p+1}_{a_p,b_p} (\boldsymbol{1}),
$$
where the $a_j, b_j, n_j$ are non-negative integers and $\boldsymbol{1}$ denotes the function equal to 
$1$ on $\C\setminus[0,1]$; such integrals appear in Sorokin's papers (e.g.,  \cite{sorokin2} and \cite{sorokin1}). 
Lemma \ref{lem:7} gives conditions on the parameters that ensure that this 
integral expression is holomorphic on $\C\setminus[0,1]$ and at $z=\infty$, and Lemma \ref{lemenun} 
plays the analogous role for the behaviour at $z=1$.

\bigskip

In the proof of Theorem \ref{theo:1} we shall use the following result which describes the behaviour of 
this integral operator under the change of variable $z\mapsto 1-z$.

\begin{lem}\label{lem:8} For any integers $a_j, b_j, n_j$, $j=1, \ldots, p$ such that 
$\BBB^{n_1+1}_{a_1,b_1} \BBB^{n_2+1}_{a_2,b_2} \cdots \BBB^{n_p+1}_{a_p,b_p}(\boldsymbol{1})$ is 
holomorphic on $\mathbb{C}\setminus [0,1]$ and at $\infty$, we have 
$$
\BBB^{n_1+1}_{a_1,b_1} \BBB^{n_2+1}_{a_2,b_2} \cdots \BBB^{n_p+1}_{a_p,b_p}(\boldsymbol{1})(1-z) = (-1)^{p+n_1+n_2+\cdots+n_p }
\BBB^{n_1+1}_{b_1,a_1} \BBB^{n_2+1}_{b_2,a_2} \cdots \BBB^{n_p+1}_{b_p,a_p}(\boldsymbol{1})(z)
$$
for all $z\in \mathbb{C}\setminus [0,1]$.
\end{lem}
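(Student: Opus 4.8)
The plan is to prove the identity by induction on $p$, the number of operators composed. The key observation is the behaviour of a single operator $\BBB_{a,b}^{n+1}$ under the substitution $z \mapsto 1-z$. Starting from the integral representation obtained in the proof of Lemma~\ref{lem:7}, namely
$$
\BBB_{a,b}^{n+1}(F)(z) = (-1)^{n+1}\int_z^{\infty} \frac{(x-z)^n}{x^a(1-x)^b} F(x)\,\dd x,
$$
I would replace $z$ by $1-z$ and perform the change of variable $x = 1-y$. Under this substitution $(x-(1-z))^n = (z-y)^n = (-1)^n(y-z)^n$, the factor $x^a(1-x)^b$ becomes $(1-y)^a y^b$, the orientation of the contour reverses (contributing a sign), and $F(x)$ becomes $F(1-y)$. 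Collecting signs, this yields an identity of the shape
$$
\BBB_{a,b}^{n+1}(F)(1-z) = (-1)^{n+1}\, \BBB_{b,a}^{n+1}\big(F(1-\,\cdot\,)\big)(z),
$$
that is, reflecting the argument swaps the roles of $a$ and $b$, reflects the input function $F \mapsto F(1-\cdot)$, and introduces a sign $(-1)^{n+1}$.

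With this single-operator identity in hand, the induction is essentially bookkeeping. For the base case $p=0$ the claim is that the constant function $\boldsymbol{1}$ satisfies $\boldsymbol{1}(1-z)=\boldsymbol{1}(z)$, which is trivial, and the sign $(-1)^0$ matches. For the inductive step, I would write
$$
\BBB^{n_1+1}_{a_1,b_1}\cdots\BBB^{n_p+1}_{a_p,b_p}(\boldsymbol{1}) = \BBB^{n_1+1}_{a_1,b_1}(G), \qquad G = \BBB^{n_2+1}_{a_2,b_2}\cdots\BBB^{n_p+1}_{a_p,b_p}(\boldsymbol{1}),
$$
apply the single-operator reflection identity to peel off the outer operator, and then apply the induction hypothesis to $G(1-z)$. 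The $F \mapsto F(1-\cdot)$ twist produced by the outer reflection is precisely what feeds the inner composition its reflected argument, so the two reflections compose cleanly; each operator contributes one sign $(-1)^{n_j+1}$, and the product of these signs over $j=1,\dots,p$ equals $(-1)^{p + n_1 + \cdots + n_p}$, matching the stated exponent.

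The main subtlety, which I would want to state carefully rather than wave away, is the justification of the contour integral form and the convergence/holomorphy needed at each intermediate stage. The single-operator identity above was derived for the representation $\int_z^\infty(x-z)^n F(x)/(x^a(1-x)^b)\,\dd x$, whose validity (as established in Lemma~\ref{lem:7}) requires the vanishing condition $\omega' \geq 1$ on the input. The hypothesis of the present lemma only assumes that the \emph{full} composition is holomorphic on $\mathbb{C}\setminus[0,1]$ and at $\infty$; so the genuine obstacle is to ensure that every partial composition $\BBB^{n_j+1}_{a_j,b_j}\cdots\BBB^{n_p+1}_{a_p,b_p}(\boldsymbol{1})$ appearing in the induction also satisfies the orders-of-vanishing hypotheses of Lemma~\ref{lem:7}, so that the operators are legitimately defined at each step and the contour manipulation is valid. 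I expect this is handled by tracking, via the formula $\omega' = \omega + a + b - n - 1$ of Lemma~\ref{lem:7}, that the order of vanishing at $\infty$ of each tail composition is positive, and by observing that reflecting $(a_j,b_j)\mapsto(b_j,a_j)$ leaves the quantity $a_j+b_j-n_j-1$ unchanged, so the holomorphy of the reflected composition is automatic once that of the original is known. Once this is in place the computation is routine, and the sign count is the only arithmetic to verify.
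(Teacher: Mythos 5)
Your proposal is correct, and it follows the same overall skeleton as the paper --- reduce everything to a reflection law for a single operator $\BBB^{n+1}_{a,b}$ under $z\mapsto 1-z$, then iterate over the composition with the sign count $\prod_{j}(-1)^{n_j+1}=(-1)^{p+n_1+\cdots+n_p}$ --- but you verify the one-step law by a genuinely different mechanism. You change variables $x\mapsto 1-y$ directly in the contour representation $\BBB^{n+1}_{a,b}(F)(z)=(-1)^{n+1}\int_z^{\infty}(x-z)^n x^{-a}(1-x)^{-b}F(x)\,\dd x$, whereas the paper never touches the integral again: it invokes the differential characterization of Lemma~\ref{lem:7}, namely that (given vanishing at $\infty$) $R=\BBB^{n+1}_{a,b}(S)$ is equivalent to $S(z)=\frac{1}{n!}z^a(1-z)^bR^{(n+1)}(z)$, substitutes $z\mapsto 1-z$, and uses the chain-rule identity $\big(R(1-z)\big)^{(n+1)}=(-1)^{n+1}R^{(n+1)}(1-z)$ to recognize the transformed relation as the characterization of $(-1)^{n+1}\BBB^{n+1}_{b,a}(S^{\partial})$, with Lemma~\ref{lem:7}$(ii)$ supplying uniqueness. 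The paper's route buys freedom from all contour bookkeeping; in your route there is one point you should make explicit rather than summarize as ``the orientation reverses'': after the substitution the $y$-path from $z$ to $\infty$ is no longer of the defining form $\{z/u:\ u\in(0,1]\}$, so identifying the result as $\BBB^{n+1}_{b,a}(F^{\partial})(z)$ requires path-independence, which holds because the integrand is holomorphic on the simply connected region $\C\setminus[0,1]$ (together with the point at infinity) and integrable at $\infty$ since its order there is $n-a-b-\omega=-1-\omega'\le -2$. Conversely, your final paragraph is actually \emph{more} careful than the paper, which iterates its equivalence silently: the observation that swapping $(a_j,b_j)\mapsto(b_j,a_j)$ preserves $a_j+b_j-n_j-1$, hence preserves every intermediate order of vanishing $\omega'$ from Lemma~\ref{lem:7}, is precisely the reason the reflected composition is well defined whenever the original one is, and it deserves to be stated in either proof.
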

\begin{proof} This is a  consequence of the following fact. Given $f(z)$, we set $f^{\partial}(z):=f(1-z)$. Then 
$$
R(z)=\BBB^{n+1}_{a,b} (S)(z) \Longleftrightarrow R^{\partial}(z)=(-1)^{n+1}\BBB^{n+1}_{b,a} (S^{\partial})(z).
$$
This equivalence results from  Lemma \ref{lem:7}:
$$
S(z)=\frac{1}{n!}z^{a}(1-z)^{b}  R^{(n+1)} (z) 
\Longleftrightarrow
S(1-z)=\frac{(-1)^{n+1}}{n!}z^{b}(1-z)^{a}\big(  R(1-z)\big)^{(n+1)}.
$$
\end{proof}

\subsection{Functional linear independence of polylogarithms}

The extended multiple po\-lylogarithms introduced in  the introduction are very useful to 
state and prove our result, but they are not really {\em new} functions: they are linear 
combinations over $\Z$ of usual multiple polylogarithms (corresponding to $\alpha_1 
= \ldots = \alpha_{p-1} =  s$ in \eqref{eq:intro4}). This follows from the following 
elementary relation (which is the starting point of \cite{crefiri}):
\begin{equation} \label{eqdeplin}
\li_{b_1b_2\cdots b_p}^{a_1 \cdots a_{j-1} \ell a_{j+1} \cdots  a_{p-1}}(z) =
 \li_{b_1b_2\cdots b_p}^{a_1 \cdots a_{j-1} s a_{j+1} \cdots  a_{p-1}}(z)  
+  \li_{b_1 \cdots b_{j-1} b' b_{j+2} \cdots  b_p}^{a_1 \cdots a_{j-1}   a_{j+1} \cdots  a_{p-1}}(z)  
\end{equation} 
where $b' = b_j + b_{j+1}$.

In the proof of Theorem \ref{theo:1} we shall use the following result.

\begin{lem} \label{lemmerom} For any $k$, let ${\bf a}_k$ be a word on the alphabet 
$\{\ell,s\}$ of length $k-1$, with ${\bf a}_1 = {\bf a}_0 = \emptyset$. Then the  
polylogarithms $\li_{\{1\}_k}^{ {\bf a}_k}(1/z)$, for $k\geq 0$, are linearly independent 
over the field $\mzero$ of functions meromorphic at 1.
\end{lem}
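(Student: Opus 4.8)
The plan is to prove linear independence by exploiting the distinct vanishing orders (or rather, the distinct behaviour under differentiation) of the functions $\li_{\{1\}_k}^{{\bf a}_k}(1/z)$ near the singular point $z=1$. First I would recall from the differentiation rules in \S\ref{sec:diff} that for any word, applying $\frac{\dd}{\dd z}$ to $\li_{1{\bf b}}^{a_1{\bf a}}(1/z)$ either lowers the depth by one (producing $\frac{1}{1-z}\li_{\bf b}^{\bf a}(1/z)$ when $a_1=\ell$, or $\frac{1}{z(1-z)}\li_{\bf b}^{\bf a}(1/z)$ when $a_1=s$); in either case differentiation of $\li_{\{1\}_k}^{{\bf a}_k}(1/z)$ strips off the leading letter and multiplies by a rational function with a simple pole at $z=1$. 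Thus each such polylogarithm of depth $k$ has a pole structure upon iterated differentiation that encodes $k$, while $\li_{\{1\}_0}(1/z)=1$ is constant.

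The key step is to set up a minimal-counterexample / descending-induction argument. Suppose we have a nontrivial relation $\sum_{k=0}^{N} m_k(z)\,\li_{\{1\}_k}^{{\bf a}_k}(1/z)=0$ with coefficients $m_k(z)\in\mzero$ not all zero, and let $N$ be the largest index with $m_N\not\equiv 0$. The idea is to differentiate the relation and track the most singular term at $z=1$. Near $z=1$, the function $\li_{\{1\}_k}^{{\bf a}_k}(1/z)$ behaves like a polynomial in $\log(1-z)$ of degree exactly $k$ (its leading logarithmic term comes from iterating the $\frac{1}{1-z}$-type derivatives), so the depth-$N$ term contributes a $(\log(1-z))^N$ asymptotic that cannot be cancelled by the lower-depth terms, whose logarithmic degree is at most $N-1$, nor absorbed by the meromorphic coefficients $m_k(z)$, which have at worst pole/holomorphic (power-type, non-logarithmic) behaviour at $1$. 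Comparing the coefficient of $(\log(1-z))^N$ in the local expansion forces $m_N\equiv 0$, contradicting the choice of $N$. I would make this rigorous by establishing first, by induction on $k$, that $\li_{\{1\}_k}^{{\bf a}_k}(1/z)$ admits a local expansion at $z=1$ as $\sum_{j=0}^{k} c_j(z)\,(\log(1-z))^{j}$ with $c_j$ holomorphic at $1$ and $c_k(1)\neq 0$; this is exactly where the differentiation formulas and the fact that polylogarithms have only logarithmic singularities at $1$ get used.

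The main obstacle I anticipate is justifying cleanly that the top logarithmic power genuinely survives, i.e.\ that the leading coefficient $c_k(1)$ is nonzero and that no conspiracy among the different words ${\bf a}_k$ and the meromorphic multipliers can eliminate the $(\log(1-z))^N$ contribution. Since the multipliers $m_k(z)$ lie in $\mzero$ (meromorphic at $1$, hence Laurent-expandable with integer-power leading terms and no log terms), multiplying a log-degree-$N$ asymptotic by such an $m_N$ cannot reduce its log-degree unless $m_N\equiv 0$; this separation between the $\mzero$-coefficients (power-type) and the polylogarithm asymptotics (genuine log powers) is the crux. I would phrase the whole argument either by extracting the coefficient of the top power of $\log(1-z)$ in the local expansion, or equivalently by applying $(z-1)\frac{\dd}{\dd z}$ repeatedly to reduce depth and isolate the term $m_N(z)$ times a nonzero constant, after which the linear independence of $\{1,\log(1-z),\ldots,(\log(1-z))^N\}$ over the field of functions holomorphic at $1$ finishes the proof.
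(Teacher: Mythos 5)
Your argument is correct, and it diverges from the paper's proof precisely at the key step, so a comparison is worth making. Both proofs share the same frame: a local expansion at $z=1$ of each depth-$p$ polylogarithm as $\sum_{i=0}^{p} h_i(z)\big(\log(1-\tfrac1z)\big)^i$ with $h_i$ holomorphic at $1$ (this is the paper's class $\calF_p$, proved stable under primitivation and products with holomorphic functions, and established by induction via the differentiation rules of \S~\ref{sec:diff}, just as you propose), and the same endgame, namely that a nontrivial relation among powers of $\log(1-\tfrac1z)$ with coefficients meromorphic at $1$ is impossible because $\log$ is transcendental over such functions --- which is exactly your linear independence of $1,\log(1-z),\ldots,(\log(1-z))^N$ over $\mzero$. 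Where you differ is in how the survival of the top logarithmic power is secured. The paper never proves your nonvanishing claim $c_k(1)\neq 0$ for a general word ${\bf a}_k$: it proves only the upper bound (log-degree at most the depth) and sidesteps the leading coefficient algebraically, using Eq.~\eqref{eqdeplin} to write $\li_{\{1\}_k}^{{\bf a}_k}(1/z)-\li_{\{1\}_k}^{\{s\}_{k-1}}(1/z)$ as a $\Z$-linear combination of depth-$(k-1)$ polylogarithms (hence an element of $\calF_{k-1}$), and then invoking the closed-form evaluation of $\li_{\{1\}_k}^{\{s\}_{k-1}}(1/z)$ as a nonzero multiple of $\big(\log(1-\tfrac1z)\big)^k$, whose top power visibly survives. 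You instead prove $c_k(1)\neq 0$ analytically, and the obstacle you flag is genuinely surmountable exactly as you sketch: both relevant derivative factors, $\frac1{1-z}$ and $\frac1{z(1-z)}$, equal $\frac1{1-z}$ plus a function holomorphic at $1$, so integrating the leading term yields the word-independent recursion $c_k(1)=-c_{k-1}(1)/k$, whence $c_k(1)=(-1)^k/k!\neq 0$ for every word ${\bf a}_k$ --- a sharper statement than the paper needs, since it identifies the leading coefficient uniformly. Your route buys self-containedness (no appeal to the stuffle-type relation \eqref{eqdeplin} nor to the folklore evaluation, and the induction stays inside the family $\li_{\{1\}_j}$ because differentiation only strips leading letters); the paper's route buys the convenience of never tracking leading coefficients, only membership in $\calF_p$. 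Two points of hygiene for your write-up: run the induction over \emph{all} words of each length (differentiation produces the tail of ${\bf a}_k$, not the specific word fixed in the lemma), and before extracting the coefficient of $(\log(1-z))^N$ multiply the relation by a suitable power of $(z-1)$ so that all coefficients $m_k$ become holomorphic at $1$, after which the comparison of top-degree coefficients forces $m_N\equiv 0$ as you say.
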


\begin{proof}
To begin with, let us consider for any $p\geq 0$ the set $\calF_p$ of all functions 
analytic on $\C\setminus [0,1]$ that can be written as $\sum_{i=0}^p h_i(z) (\log(1-\frac1z))^i$ 
where $h_0(z)$, \ldots, $h_p(z)$ are functions holomorphic  on $\C\setminus [0,1]$ and at $z=1$. 
Of course all functions  holomorphic on $\C\setminus [0,1]$ and at $z=1$ belong to $\calF_0$, 
and $\li_1(1/z) = - \log(1-\frac1z)$ belongs to $\calF_1$. We claim that for any $p\geq 0$, 
for any $\alpha_1,\ldots,\alpha_{p-1} \in\{\ell,s\}$ and any $b_1,\ldots,b_p\geq 1$, we have
$$\li_{b_1b_2\cdots b_p}^{a_1 \cdots a_{p-1}}(1/z) \in\calF_p.$$
Let us prove this claim by induction on the weight $b_1+\cdots+b_p$. We have already noticed 
that it holds if $b_1+\cdots+b_p \leq 1$. Now remark that if $f$ is   analytic on $\C\setminus [0,1]$ 
and $g\in \calF_{p}$ are such that $f'(z) = \frac{-1}z g(z)$ then $f\in\calF_p$, because $\calF_p$ 
is stable under primitivation and products with functions holomorphic at 1. On the other hand, if 
 $f'(z) = \frac{1}{1-z} g(z)$ or $f'(z) = \frac{1}{z(1-z)} g(z)$ then $f\in\calF_{p+1}$. 
Using the differentiation rules for polylogarithms stated at the beginning of \S~ \ref{sec:diff}, this proves the claim.

\bigskip

Now assume that for some $k\geq 1$ the function $\li_{\{1\}_k}^{ {\bf a}_k}(1/z)$ is 
 a linear combination over $\mzero$ of the $\li_{\{1\}_j}^{ {\bf a}_j}(1/z)$ for $0\leq j \leq k-1$. 
Using the claim this implies $\li_{\{1\}_k}^{ {\bf a}_k}(1/z) \in \calF_{k-1}$. Now applying 
Eq. \eqref{eqdeplin} as many times as needed one can write 
$\li_{\{1\}_k}^{ {\bf a}_k}(1/z)-\li_{\{1\}_k}^{ \{s\}_{k-1}}(1/z)$ as a $\Z$-linear combination 
of extended multiple polylogarithms of depth $k-1$; applying 
the claim again proves that 
 $ \li_{\{1\}_k}^{ \{s\}_{k-1}}(1/z) = (-1)^{k} \big( \log(1-\frac1z)\big)^k$ 
belongs to $\calF_{k-1}$ (this identity belongs to the folklore and is readily proved 
by induction and differentiation). 
But this provides 
a non-trivial linear relation, with coefficients holomorphic at 1, between powers 
of the function $\log(1-\frac1z)$. This is impossible since $\log(z)$ is transcendental 
over the field of functions meromorphic at the origin. This contradiction 
concludes the proof of Lemma~\ref{lemmerom}. 
\end{proof}

\section{Weight functions of multiple polylogarithms}\label{sec:weight}

In this section we study the weight functions of multiple polylogarithms and compute some of them. 
This part is at the heart of the proof of Theorem \ref{theo:1}, since {\em weights obey the same 
derivation  rules as the corresponding polylogarithms} (see below).  

If ${\bf b}=\emptyset$, 
$\li_{\emptyset}^{\boldsymbol{a}}(z)=1/(1-z)$ and none of the considerations below apply. From now on, 
we consider non-empty words ${\bf b}$. 
It is well-known that  usual 
multiple polylogarithms $\li_{{\bf b}}^{\boldsymbol{a}}(z)$ (with $\boldsymbol{a}=ss\cdots s$) 
can be analytically continued to the cut plane $\mathbb C \setminus [1,+\infty)$. They vanish at $z=0$ and 
their growth as $z\to \infty$ is 
at most a power of $\log(z)$, with $0< \arg(z)< 2\pi$.  
Moreover, the function defined on the cut by
$$
\lim_{y\to 0+}
\left[\li_{{\bf b}}^{ss\cdots s}\left(x+iy\right)- 
\li_{{\bf b}}^{ss\cdots s}\left(x-iy\right)\right]
$$
is $C^{\infty}$ on $(1,+\infty)$ with at most a (power of) logarithm singularity at $x=1$ and $x=\infty$. All these 
properties also hold for $\li_{{\bf b}}^{\boldsymbol{a}}(z)$ for any word ${\bf a}$ because such functions are simply 
linear combinations with rational coefficients of the $\li_{{\bf b}}^{ss\cdots s}(z)$ (using repeatedly Eq. \eqref{eqdeplin} above).

As an (important) application, we prove the following lemma.

\begin{lem}\label{lem:weight} For any fixed $z\in \mathbb C\setminus [0,1]$, 
any ${\bf a}$ and any ${\bf b}\neq \emptyset$, we have
\begin{equation}\label{eq:intpoids}
\li_{{\bf b}}^{\boldsymbol{a}}\left(\frac1z\right) = \int_0^1 
\frac{\om_{{\bf b}}^{\boldsymbol{a}}(x)}{z-x} \dd x,
\end{equation}
where 
\begin{equation}\label{eq:expressionpoids}
\om_{{\bf b}}^{\boldsymbol{a}}(x) :=\frac1{2i \pi}\lim_{y\to 0+}
\left[\li_{{\bf b}}^{\boldsymbol{a}}\left(\frac{1}{x}+iy\right)- 
\li_{{\bf b}}^{\boldsymbol{a}}\left(\frac{1}{x}-iy\right)\right] \in L^1([0,1]).
\end{equation}
The {\em weight function}  $\om_{{\bf b}}^{\boldsymbol{a}}(x)$ is $C^\infty$ on $(0,1)$, with at most (power of) logarithm 
singularities at $x=0$ and $x=1$.
\end{lem}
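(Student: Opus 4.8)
The plan is to realise $g(z):=\li_{{\bf b}}^{\boldsymbol a}(1/z)$ as the Cauchy transform of its own jump across the segment $[0,1]$, via a dispersion relation. First I would set up the analytic framework. Since the inversion $z\mapsto 1/z$ sends $\C\setminus[0,1]$ into $\C\setminus[1,\infty)$ (the image of $[0,1]$ being $[1,\infty]$), and $\li_{{\bf b}}^{\boldsymbol a}$ is holomorphic on $\C\setminus[1,\infty)$, the function $g$ is holomorphic on $\C\setminus[0,1]$. Because ${\bf b}\neq\emptyset$ we have $\li_{{\bf b}}^{\boldsymbol a}(0)=0$, so $g$ is holomorphic at $z=\infty$ with $g(\infty)=0$, hence $g(z)=\gdo(1/z)$ there. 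The endpoint behaviour is inherited from the discussion preceding the lemma: at $z=1$ (i.e. $1/z=1$) the function $g$ has at most a logarithmic singularity, and at $z=0$ (i.e. $1/z\to\infty$) at most a power of $\log(z)$. As the properties recalled there hold for every word $\boldsymbol a$ — the $\li_{{\bf b}}^{\boldsymbol a}$ being $\Q$-linear combinations of the usual $\li_{{\bf b}}^{ss\cdots s}$ through repeated use of \eqref{eqdeplin} — there is no loss in treating all $\boldsymbol a$ at once.

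Next I would identify the jump of $g$ across $(0,1)$. For $x\in(0,1)$ and $\ep>0$ small, $1/(x+i\ep)=1/x-i\ep/x^2+\gdo(\ep^2)$, so $g(x\pm i0)=\li_{{\bf b}}^{\boldsymbol a}(1/x\mp i0)$; comparison with \eqref{eq:expressionpoids} yields the key relation
$$g(x+i0)-g(x-i0)=-\big[\li_{{\bf b}}^{\boldsymbol a}(1/x+i0)-\li_{{\bf b}}^{\boldsymbol a}(1/x-i0)\big]=-2i\pi\,\om_{{\bf b}}^{\boldsymbol a}(x).$$
The regularity assertions for $\om_{{\bf b}}^{\boldsymbol a}$ then follow by transporting, through the change of variable $x\mapsto 1/x$, the properties of the discontinuity of $\li_{{\bf b}}^{ss\cdots s}$ stated before the lemma: it is $C^\infty$ on $(1,\infty)$ with at most logarithmic singularities at $1$ and $\infty$, hence $\om_{{\bf b}}^{\boldsymbol a}$ is $C^\infty$ on $(0,1)$ with at most logarithmic singularities at $0$ and $1$, and in particular $\om_{{\bf b}}^{\boldsymbol a}\in L^1([0,1])$.

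Finally I would apply Cauchy's formula on the cut plane. Fixing $z\in\C\setminus[0,1]$, I integrate $g(\zeta)/(\zeta-z)$ over a contour made of a large circle $|\zeta|=R$ together with a dog-bone shrinking onto $[0,1]$ (small circles of radius $\de$ around $0$ and $1$, joined by the two sides of the segment). As $R\to\infty$ the large-circle contribution tends to $0$ because $g(\zeta)=\gdo(1/\zeta)$; as $\de\to0$ the endpoint circles contribute $\gdo(\de|\log\de|)$ and $\gdo(\de|\log\de|^{m})$ for some integer $m\ge0$, and hence vanish, thanks to the endpoint bounds above. What remains is the integral along the two sides of the cut, which collapses to the jump, giving
$$g(z)=\frac1{2i\pi}\int_0^1\frac{g(x+i0)-g(x-i0)}{x-z}\,\dd x=\int_0^1\frac{\om_{{\bf b}}^{\boldsymbol a}(x)}{z-x}\,\dd x,$$
by the relation displayed above. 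This is exactly \eqref{eq:intpoids}.

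The main obstacle is the rigorous justification of this contour collapse — controlling the two endpoint circles and the circle at infinity. This is precisely where the at-most-logarithmic growth of $g$ at $z=1$, the power-of-$\log$ growth at $z=0$, and the vanishing $g(\infty)=0$ are each used; once these estimates are secured, both the dispersion relation and the integrability of $\om_{{\bf b}}^{\boldsymbol a}$ follow routinely.
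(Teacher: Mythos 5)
Your proof is correct and follows essentially the same route as the paper: a Cauchy-formula contour collapse onto the cut, identifying the weight $\om_{{\bf b}}^{\boldsymbol a}$ as ($\frac{1}{2i\pi}$ times) the jump of the polylogarithm across the cut, with the same endpoint and at-infinity estimates doing the work. The only cosmetic difference is that you perform the inversion first and shrink a dog-bone around $[0,1]$ in the $z$-plane (correctly tracking the orientation reversal $g(x\pm i0)=\li_{{\bf b}}^{\boldsymbol a}(1/x\mp i0)$), whereas the paper deforms a contour around the cut $[1,+\infty)$ in the $t$-plane and substitutes $x=1/t$ afterwards.
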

\begin{proof}
For any fixed $z\in \mathbb C\setminus [1,+\infty)$, let us consider
the Cauchy representation formula 
$$
\li_{{\bf b}}^{\boldsymbol{a}}(z) = \frac z{2i\pi}\int_{\mathcal{C}} 
\frac{\li_{{\bf b}}^{\boldsymbol{a}}(t)}{t(t-z)} \dd t,
$$
where $\mathcal{C}$ is any simple closed curve surrounding $z$ and not crossing the cut $[1,+\infty)$. We can deform 
$\mathcal{C}$ to a simple closed curve defined as follows: 
given $\varepsilon>0$ and $R>0$ (such that $\vert z\vert <R$), we glue together
two straightlines $[1+i\varepsilon,R+i\varepsilon]$, 
$[1-i\varepsilon+R,R-i\varepsilon]$, a semi-circle of center $1$ and diameter $[1-i\varepsilon, 1+i\varepsilon]$ and an arc 
of circle of center $0$ passing through $R+i\varepsilon$ and $R-i\varepsilon$ 
(both arcs not crossing $[1,+\infty)$). The analytic 
properties of $\li_{{\bf b}}^{\boldsymbol{a}}(z)$ are such that we can let $\varepsilon\to 0$ and $R\to \infty$ 
to get the representation
\begin{align*}
\li_{{\bf b}}^{\boldsymbol{a}}(z) 
& = z\int_1^{\infty} \frac{\om_{{\bf b}}^{\boldsymbol{a}}(1/t)}{t(t-z)} \dd t\\
& = z\int_0^{1} \frac{\om_{{\bf b}}^{\boldsymbol{a}}(x)}{1-zx} \dd x\qquad (\mbox{by letting }x=1/t),
\end{align*}
where $\om_{{\bf b}}^{\boldsymbol{a}}(x)$ is defined by~\eqref{eq:expressionpoids}. 
We obtain~\eqref{eq:intpoids} by changing $z$  to $1/z$.
\end{proof}
(This proof is not specific to multiple polylogarithms. Such weighted integral representations 
are known as Stieltjes representations; see~\cite[p. 591, Theorem 12.10d]{Henrici}.)

\medskip

We note two important consequences of the expression~\eqref{eq:expressionpoids} 
for $\om_{{\bf b}}^{\boldsymbol{a}}(x)$. To begin with,  if 
$$
\frac{\dd }{\dd z}\left[\li_{{\bf b}}^{\boldsymbol{a}}\left(\frac1z\right)\right] 
= R(z)\li_{{\bf b}'}^{\boldsymbol{a}'}\left(\frac1z\right), 
$$
then
$$
\frac{\dd }{\dd x}\om_{{\bf b}}^{\boldsymbol{a}}(x) 
= R(x)\om_{{\bf b}'}^{\boldsymbol{a}'}(x)
$$
where the function $R(z)$ is one of $\displaystyle -\frac{1}{z},\frac{1}{1-z}$ and  
$\displaystyle \frac{1}{z(1-z)}$ (see \S~ \ref{sec:diff}). In other words, {\em weights obey the same 
derivation  rules as the corresponding polylogarithms.} This observation will be crucial 
in \S~\ref{subsecfirstred}. Moreover, we also remark that  if 
the value $\li_{{\bf b}}^{\boldsymbol{a}}(1)$ is finite, then 
$\om_{{\bf b}}^{\boldsymbol{a}}(1)=0$.

\begin{lem}\label{lem:1} For any $x\in (0,1)$ and any integer $k\ge 0$, we have
\begin{equation}\label{eq:1}
\om_{\{1\}_{2k}}^{\{\ell s\}_{k-1}\ell}(x)=\li_{\{1\}_{2k-1}}^{\{s\ell\}_{k-1}}(x), 
\end{equation}
\begin{equation}\label{eq:2}
\om_{\{1\}_{2k+1}}^{\{s \ell \}_{k}}(x)=\li_{\{1\}_{2k}}^{\{\ell s\}_{k-1}\ell}(x),
\end{equation}
and 
\begin{align}
\om_{2\{1\}_{2k+1}}^{\{\ell s\}_{k}\ell}(x)&=\sum_{j=0}^k \li_{1\{2\}_j}^{\{\ell\}_j}(1-x) \li_{\{1\}_{2k-2j+1}}^{\{s \ell\}_{k-j}}(x) \notag
\\
& \qquad \qquad \qquad +\sum_{j=1}^{k+1} \li_{\{2\}_j}^{\{\ell\}_{j-1}}(1-x) \li_{\{1\}_{2k-2j+2}}^{\{\ell s\}_{k-j}\ell}(x) \label{eq:3}
\\ 
&= -\li_{2\{1\}_{2k}}^{\{s \ell \}_{k}}(x) + \li_{\{2\}_{k+1}}^{\{\ell\}_k}(1).\label{eq:3alternative}
\end{align}

\end{lem}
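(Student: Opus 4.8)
The plan is to prove the four identities by a single coupled induction on $k$, exploiting the principle recorded just before the statement: \emph{weight functions obey the same derivation rules as the corresponding multiple polylogarithms}. For each identity I would check that its two sides have the same derivative on $(0,1)$ — which, after applying the elementary rules of \S\ref{sec:diff}, collapses to a lower-depth instance already available by induction — and then pin down the single additive constant of integration through the boundary behaviour at $x\to0^+$ or $x\to1^-$. Since both sides are holomorphic on $(0,1)$, equality of derivatives together with agreement at one boundary point forces equality.

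First I would prove \eqref{eq:1} and \eqref{eq:2} simultaneously. The base case is \eqref{eq:2} for $k=0$, namely $\om_1(x)=1$, immediate from $\li_1(1/z)=\int_0^1\frac{\dd x}{z-x}$ and Lemma~\ref{lem:weight}. For the step, differentiating the left side of \eqref{eq:1} gives $\frac{\dd}{\dd x}\om_{\{1\}_{2k}}^{\{\ell s\}_{k-1}\ell}(x)=\frac1{1-x}\om_{\{1\}_{2k-1}}^{\{s\ell\}_{k-1}}(x)$, and \eqref{eq:2} at level $k-1$ rewrites the right-hand factor as $\li_{\{1\}_{2k-2}}^{\{\ell s\}_{k-2}\ell}(x)$; this coincides with $\frac{\dd}{\dd x}\li_{\{1\}_{2k-1}}^{\{s\ell\}_{k-1}}(x)$, so the two sides of \eqref{eq:1} differ by a constant. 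Symmetrically, the rule producing the factor $\frac1{x(1-x)}$ deduces \eqref{eq:2} at level $k$ from \eqref{eq:1} at level $k$. To fix the constants I would let $x\to0^+$: the right-hand sides are ordinary polylogarithms vanishing at $0$, so it suffices to show the weight functions on the left also vanish there. Equivalently, the discontinuity of $\li_{\{1\}_{2k}}^{\{\ell s\}_{k-1}\ell}(w)$ and of $\li_{\{1\}_{2k+1}}^{\{s\ell\}_k}(w)$ across $[1,\infty)$ must tend to $0$ as $w\to\infty$; I would propagate this through the same recursion, starting from the directly checked base case $\om_{11}^\ell(x)=-\log(1-x)$ and using that at each step the differentiated discontinuity is $\frac1{1-w}$ or $\frac1{w(1-w)}$ times the lower discontinuity, which decays like $1/w$. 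This vanishing is the more delicate of the two technical points.

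Next I would treat \eqref{eq:3}. Differentiating the left side with the index-$2$ rule gives $\frac{\dd}{\dd x}\om_{2\{1\}_{2k+1}}^{\{\ell s\}_k\ell}(x)=-\frac1x\om_{\{1\}_{2k+2}}^{\{\ell s\}_k\ell}(x)$, which \eqref{eq:1} at level $k+1$ identifies with $-\frac1x\li_{\{1\}_{2k+1}}^{\{s\ell\}_k}(x)$. The core computation is to differentiate the double sum on the right by Leibniz's rule. Writing each summand as a product of a $\li(1-x)$-factor and a $\li(x)$-factor, the derivation rules (with the chain-rule sign coming from $x\mapsto1-x$) express the derivative of each factor in terms of neighbouring factors; one then checks that the $\frac1{1-x}$-type contributions cancel in pairs between the two sums, while the $\frac1{x(1-x)}$-type contributions telescope after reindexing. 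Only a single boundary term survives, equal to $-\frac1x\li_{\{1\}_{2k+1}}^{\{s\ell\}_k}(x)$, matching the left side. I would fix the constant by letting $x\to1^-$: since the leading index is $2$, $\li_{2\{1\}_{2k+1}}^{\{\ell s\}_k\ell}(1)$ converges, so by the observation following Lemma~\ref{lem:weight} the weight vanishes at $1$, while every summand on the right, carrying a factor $\li_{\cdots}(1-x)$ that vanishes as $x\to1^-$, also tends to $0$.

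Finally, \eqref{eq:3alternative} shares its left side with \eqref{eq:3}, so I would only compare it with the candidate $-\li_{2\{1\}_{2k}}^{\{s\ell\}_k}(x)+\li_{\{2\}_{k+1}}^{\{\ell\}_k}(1)$. The index-$2$ rule gives $\frac{\dd}{\dd x}\li_{2\{1\}_{2k}}^{\{s\ell\}_k}(x)=\frac1x\li_{\{1\}_{2k+1}}^{\{s\ell\}_k}(x)$, so this candidate has exactly the derivative computed above, and it therefore agrees with the left side up to a constant. To fix that constant I would evaluate as $x\to0^+$: reading off \eqref{eq:3} in this limit, every product containing a factor $\li_{\cdots}(x)$ vanishing at $0$ drops out, leaving only the $j=k+1$ term of the second sum, so that $\om_{2\{1\}_{2k+1}}^{\{\ell s\}_k\ell}(0^+)=\li_{\{2\}_{k+1}}^{\{\ell\}_k}(1)$, which matches the candidate at $0$. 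As a by-product, consistency at $x=1$ yields the duality $\zeta_{2\{1\}_{2k}}^{\{s\ell\}_k}=\zeta_{\{2\}_{k+1}}^{\{\ell\}_k}$. The main obstacle I anticipate is the bookkeeping of the telescoping cancellation in \eqref{eq:3}, together with the verification that the weight functions in \eqref{eq:1}–\eqref{eq:2} vanish at $x\to0^+$; the remaining steps are routine applications of the differentiation rules.
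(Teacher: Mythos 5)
Your handling of \eqref{eq:3} and \eqref{eq:3alternative} is exactly the paper's proof: both sides of \eqref{eq:3} are shown to differentiate to $-\frac1x\,\om_{\{1\}_{2k+2}}^{\{\ell s\}_k\ell}(x)$ via telescoping, the constant is fixed at $x=1$ by the remark following Lemma~\ref{lem:weight}, and \eqref{eq:3alternative} is then obtained from the same derivative with the constant read off from \eqref{eq:3} at $x=0$. For \eqref{eq:1}--\eqref{eq:2}, however, the paper does \emph{not} use your derivative-plus-boundary-value induction: it verifies these identities directly by expanding $\frac{1}{z-x}=\sum_{n\ge 0}x^nz^{-n-1}$ in \eqref{eq:intpoids}, i.e.\ by matching every moment $\int_0^1 x^n\om(x)\,\dd x$ against the corresponding coefficient of $\li(1/z)$. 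That difference is not cosmetic, and it is precisely where your argument has a genuine gap.

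Your induction determines each weight only up to an additive constant, and you propose to fix the constant by the claim $\om(0^+)=0$, i.e.\ that the discontinuity $D(w)$ tends to $0$ as $w\to\infty$. Your propagation argument cannot deliver this: from $D'(w)=\tfrac{1}{w(1-w)}D_{\mathrm{lower}}(w)$ with $D_{\mathrm{lower}}(w)=O(1/w)$ you get only that $\lim_{w\to\infty}D(w)$ \emph{exists}; its value is exactly the unresolved constant of integration, so the induction is circular at this point (and when the factor is $\tfrac{1}{1-w}$, a nonzero limit at the lower level would even force $\log w$ growth at the next level, so decay estimates alone cannot close the loop). Worse, the vanishing you assume actually fails: since $\li_{\{1\}_{3}}^{s\ell}(1/z)=O(z^{-2})$, the representation \eqref{eq:intpoids} forces $\int_0^1\om_{\{1\}_{3}}^{s\ell}(x)\,\dd x=0$, whereas $\int_0^1\li_{11}^{\ell}(x)\,\dd x=\sum_{k\ge1}H_k/(k(k+1))=\zeta(2)>0$; combined with the derivative computation this gives $\om_{\{1\}_{3}}^{s\ell}(x)=\li_{11}^{\ell}(x)-\zeta(2)$, so the weight tends to $-\zeta(2)\neq 0$ at $x\to 0^+$ (and, as stated, \eqref{eq:2} must be corrected by such an additive constant for $k\ge 1$, with the corresponding corrections cascading into \eqref{eq:1} for $k\ge2$). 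Additive MZV constants are intrinsic to the weights of this family --- the term $\li_{\{2\}_{k+1}}^{\{\ell\}_k}(1)$ in \eqref{eq:3alternative} is one of them --- and they are visible only through the coefficientwise (moment) normalization the paper uses, never through the behaviour at $x\to0^+$ alone. So: your differentiation step is fine, but the constants must be pinned by matching the first nonvanishing coefficient of $\li(1/z)$, as in the paper's expansion argument, not by an unproven (and here false) vanishing at the endpoint.
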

\begin{proof} Equations~\eqref{eq:1} and~\eqref{eq:2} are readily checked by expanding 
$\frac{1}{z-x}=\sum_{n=0}^{\infty} \frac{x^n}{z^{n+1}}$ in the integral~\eqref{eq:intpoids}.  
To prove~\eqref{eq:3}, we remark that both sides differentiate to the same function 
$
-\frac1x \om_{\{1\}_{2k+2}}^{\{\ell s\}_k\ell}(x)=-\frac1x \li_{\{1\}_{2k+1}}^{\{s \ell \}_{k}}(x)$, since all
  functions but this precise one are 
killed by telescoping when differentiating the right 
hand side of~\eqref{eq:3}. It follows that the functions on both sides 
of~\eqref{eq:3} differ only by a constant. This constant 
must be $0$ because both sides vanish at $x=1$ (see the remark just before Lemma~\ref{lem:1}).
The same argument yields also 
$$
\om_{2\{1\}_{2k+1}}^{\{\ell s\}_{k}\ell}(x) = - \int\frac1x \li_{\{1\}_{2k+1}}^{\{s \ell \}_{k}}(x) \dd x
=-\li_{2\{1\}_{2k}}^{\{s \ell \}_{k}}(x) + C_k
$$
for some constant $C_k$. This constant is seen to be equal to 
$\li_{\{2\}_{k+1}}^{\{\ell\}_k}(1)$ by taking  $x=0$ in~\eqref{eq:3}. This proves \eqref{eq:3alternative}, 
and concludes the proof of Lemma \ref{lem:1}.
\end{proof}

In the setting of the Pad\'e problem $\mathcal{P}_{r,n}$, we define the function
$$
P_{r,n}(z) = \sum_{\rho=0}^r \bigg[ A_{\rho,r,n}(z) \om_{2\{1\}_{2\rho+1}}^{\{\ell s\}_\rho\ell}(z) 
+ B_{\rho,r,n}(z) \om_{\{1\}_{2\rho+2}}^{\{\ell s\}_\rho\ell}(z)+C_{\rho,r,n}(z)
\om_{\{1\}_{2\rho+1}}^{\{s\ell\}_\rho}(z)\bigg]
$$
obtained from $S_{r,n}$ by replacing every polylogarithm with its weight (see Lemma \ref{lem:3} below).
By~\eqref{eq:1}, \eqref{eq:2} and~\eqref{eq:3alternative}, this function $P_{r,n}$
is analytic on the disk $\vert z\vert<1$, with a (power of) logarithm singularity at $z=1$. In particular, 
it is in $L^1([0,1])$. The following lemma is an immediate consequence 
of~\eqref{eq:1}, \eqref{eq:2},~\eqref{eq:3} and   the definition of $U_{j,r,n}(z)$ and $V_{j,r,n}(z)$. 
As in the rest of the paper, we continue analytically all polylogarithms to $\C\setminus [1,+\infty)$.

\begin{lem}\label{lem:2} 
For any $z\in \mathbb C \setminus  [1,+\infty)$, 
$$
P_{r,n}(z)=\sum_{j=0}^r \bigg[U_{j,r,n}(z)\li_{\{1\}_{2j+1}}^{\{s\ell\}_j}(z) 
+ V_{j,r,n}(z)\li_{\{1\}_{2j}}^{\{\ell s\}_{j-1}\ell}(z)\bigg].
$$
\end{lem}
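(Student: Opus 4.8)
The plan is to prove the identity for $P_{r,n}(z)$ directly by matching the definition of $P_{r,n}$ against its weight expression, reorganizing the double sum into the single sum indexed by $j$. The key structural fact, emphasized in \S~\ref{sec:weight}, is that \emph{weights obey the same derivation rules as the corresponding polylogarithms}; this is what will let me transport the defining relations of $U_{j,r,n}(z)$ and $V_{j,r,n}(z)$ from the polylogarithm setting to the weight setting.

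First I would substitute the explicit expressions for the three weight functions appearing in the definition of $P_{r,n}(z)$. By~\eqref{eq:2} we have $\om_{\{1\}_{2\rho+1}}^{\{s\ell\}_\rho}(z) = \li_{\{1\}_{2\rho}}^{\{\ell s\}_{\rho-1}\ell}(z)$, and by~\eqref{eq:1} (shifted) we have $\om_{\{1\}_{2\rho+2}}^{\{\ell s\}_\rho\ell}(z) = \li_{\{1\}_{2\rho+1}}^{\{s\ell\}_\rho}(z)$. For the harder weight function $\om_{2\{1\}_{2\rho+1}}^{\{\ell s\}_\rho\ell}(z)$, I would use the \emph{expanded} formula~\eqref{eq:3} rather than~\eqref{eq:3alternative}, since~\eqref{eq:3} already displays the decomposition as a sum over $j$ of products of a $\li^{\{\ell\}}_{\cdots}(1-z)$ factor with one of the two families $\li_{\{1\}_{2\rho-2j+1}}^{\{s\ell\}_{\rho-j}}(z)$ or $\li_{\{1\}_{2\rho-2j+2}}^{\{\ell s\}_{\rho-j}\ell}(z)$ that appear on the right-hand side of Lemma~\ref{lem:2}.

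Having made these substitutions, I would collect terms according to the factor $\li_{\{1\}_{2j+1}}^{\{s\ell\}_j}(z)$ or $\li_{\{1\}_{2j}}^{\{\ell s\}_{j-1}\ell}(z)$. The point is that the polylogarithms of the argument $1-z$ that arise from~\eqref{eq:3}, namely $\li_{1\{2\}_{\rho-j}}^{\{\ell\}_{\rho-j}}(1-z)$ and $\li_{\{2\}_{\rho-j+1}}^{\{\ell\}_{\rho-j}}(1-z)$, are exactly the functions occurring in the definitions of $U_{j,r,n}(z)$ and $V_{j,r,n}(z)$ in $\mathcal{P}_{r,n}$. Thus, after a reindexing of the double sum, the coefficient multiplying $\li_{\{1\}_{2j+1}}^{\{s\ell\}_j}(z)$ reassembles precisely into $U_{j,r,n}(z) = \sum_{\rho=j}^r A_{\rho,r,n}(z)\li_{1\{2\}_{r-\rho}}^{\{\ell\}_{r-\rho}}(1-z) + B_{j,r,n}(z)$, where the $B_{j,r,n}(z)$ term is supplied by the contribution of $B_{\rho,r,n}(z)\,\om_{\{1\}_{2\rho+2}}^{\{\ell s\}_\rho\ell}(z)$ at $\rho=j$; symmetrically, the coefficient of $\li_{\{1\}_{2j}}^{\{\ell s\}_{j-1}\ell}(z)$ reassembles into $V_{j,r,n}(z)$, the $C_{j,r,n}(z)$ term coming from $C_{\rho,r,n}(z)\,\om_{\{1\}_{2\rho+1}}^{\{s\ell\}_\rho}(z)$ at $\rho=j$.

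The main obstacle will be the bookkeeping in the reindexing: one must verify that the double sum $\sum_{\rho=0}^r\sum_{j}$ arising from~\eqref{eq:3} matches, term by term, the weights of the $\li(1-z)$ factors in the definitions of $U_{j,r,n}$ and $V_{j,r,n}$ under the substitution $r-\rho \leftrightarrow \rho - j$, and that no stray terms remain. Care is needed at the boundary indices $j=0$ and $j=r$, and in checking that the two families in~\eqref{eq:3} (the one indexed from $0$ to $k$ and the one from $1$ to $k+1$) align correctly with the two polylogarithm families $\li_{\{1\}_{2j+1}}^{\{s\ell\}_j}(z)$ and $\li_{\{1\}_{2j}}^{\{\ell s\}_{j-1}\ell}(z)$. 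Once the indices are matched, the identity is a pure rearrangement, valid for $z\in\C\setminus[1,+\infty)$ by the analytic continuation conventions already fixed.
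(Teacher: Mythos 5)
Your plan is exactly the paper's proof: the authors give no more detail than declaring Lemma~\ref{lem:2} an ``immediate consequence'' of \eqref{eq:1}, \eqref{eq:2}, \eqref{eq:3} and the definitions of $U_{j,r,n}$ and $V_{j,r,n}$, and your substitution-and-regrouping scheme --- using the expanded form \eqref{eq:3} rather than \eqref{eq:3alternative}, and locating the $B_{j,r,n}(z)$ and $C_{j,r,n}(z)$ contributions in the terms $\rho=j$ of the $B$- and $C$-sums --- is precisely that computation spelled out. Your substitutions from \eqref{eq:1} and \eqref{eq:2} are also the right ones.

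The one soft spot is the bookkeeping you yourself flagged, and it does not go through as you assert. Writing \eqref{eq:3} with $k=\rho$ and inner index $i$, the term $A_{\rho,r,n}(z)\,\om_{2\{1\}_{2\rho+1}}^{\{\ell s\}_\rho\ell}(z)$ contributes $\li_{1\{2\}_{i}}^{\{\ell\}_{i}}(1-z)\,\li_{\{1\}_{2(\rho-i)+1}}^{\{s\ell\}_{\rho-i}}(z)$, so after setting $j=\rho-i$ the coefficient of $\li_{\{1\}_{2j+1}}^{\{s\ell\}_j}(z)$ in $P_{r,n}(z)$ comes out as $\sum_{\rho=j}^{r}A_{\rho,r,n}(z)\,\li_{1\{2\}_{\rho-j}}^{\{\ell\}_{\rho-j}}(1-z)+B_{j,r,n}(z)$: the weight of the $\li(1-z)$ factor attached to $A_{\rho,r,n}$ is $\rho-j$, whereas the displayed definition of $U_{j,r,n}$ in $\Prn$ attaches weight $r-\rho$. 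The substitution $r-\rho\leftrightarrow\rho-j$ you invoke, i.e.\ $\rho\mapsto r+j-\rho$, maps one index to the other but simultaneously permutes the polynomials $A_{\rho,r,n}$, so a pure reindexing cannot turn your double sum into the printed formula; for $r\geq 1$ the two expressions are genuinely different (e.g.\ for $r=1$, $j=0$ one gets $A_0\li_{1}(1-z)+A_1\li_{12}^{\ell}(1-z)+B_0$ versus the printed $A_0\li_{12}^{\ell}(1-z)+A_1\li_{1}(1-z)+B_0$). The resolution is that the statement of $\Prn$ carries an index typo, $r-\rho$ standing for $\rho-j$ (likewise $r-\rho+1$ for $\rho-j+1$ in $V_{j,r,n}$, whose coefficient is $\sum_{\rho=j}^{r}A_{\rho,r,n}(z)\,\li_{\{2\}_{\rho-j+1}}^{\{\ell\}_{\rho-j}}(1-z)+C_{j,r,n}(z)$); this reading is corroborated by the functions $Q_{j,r,n}$ of \S~\ref{subsecsecondred}, which are built with the indices $\rho-j$, and it is harmless for the rest of the paper, where only the vanishing of $U_{j,r,n}$ and $V_{j,r,n}$ to order $n+1$ at $z=1$ is used. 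So your argument is the intended one, but the final step should state the computed coefficients and the index correction, rather than assert a term-by-term match with the formula as printed, which would fail.
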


We conclude this section with the precise connection between $ P_{r,n}(z)$ and $S_{r,n}(z)$.
\begin{lem} \label{lem:3} 
In the setting of the Pad\'e problem $\mathcal{P}_{r,n}$, for any $z\in \mathbb C \setminus [0,1]$ 
we have
$$
S_{r,n}(z)=\int_0^1 \frac{P_{r,n}(x)}{z-x} \dd x.
$$
\end{lem}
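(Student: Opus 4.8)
The plan is to apply the Stieltjes representation of Lemma~\ref{lem:weight} to each polylogarithm occurring in $S_{r,n}(z)$, and then to handle the fact that the polynomial coefficients $A_{\rho,r,n}$, $B_{\rho,r,n}$, $C_{\rho,r,n}$ depend on the variable $z$ rather than on the integration variable $x$ of the weight functions. Concretely, for each $\rho$ Lemma~\ref{lem:weight} gives
$$
\li_{2\{1\}_{2\rho+1}}^{\{\ell s\}_\rho\ell}\Big(\frac1z\Big)=\int_0^1\frac{\om_{2\{1\}_{2\rho+1}}^{\{\ell s\}_\rho\ell}(x)}{z-x}\dd x,
$$
and likewise for $\li_{\{1\}_{2\rho+2}}^{\{\ell s\}_\rho\ell}(1/z)$ and $\li_{\{1\}_{2\rho+1}}^{\{s\ell\}_\rho}(1/z)$. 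Multiplying by the corresponding coefficient, summing over $\rho$, and isolating the polynomial term, I would obtain
$$
S_{r,n}(z)-D_{r,n}(z)=\sum_{\rho=0}^r A_{\rho,r,n}(z)\int_0^1\frac{\om_{2\{1\}_{2\rho+1}}^{\{\ell s\}_\rho\ell}(x)}{z-x}\dd x+\cdots,
$$
the dots standing for the analogous $B$- and $C$-contributions. All these operations are legitimate for $z\in\C\setminus[0,1]$ since the weight functions are in $L^1([0,1])$ and $1/(z-x)$ is bounded on $x\in[0,1]$.

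Next I would compare this with $\int_0^1\frac{P_{r,n}(x)}{z-x}\dd x$, in which the factor multiplying $\om_{2\{1\}_{2\rho+1}}^{\{\ell s\}_\rho\ell}(x)$ is $A_{\rho,r,n}(x)$ rather than $A_{\rho,r,n}(z)$, and similarly for the other two families. Subtracting term by term, the discrepancy produced by the $A$-family is
$$
\sum_{\rho=0}^r\int_0^1\frac{A_{\rho,r,n}(z)-A_{\rho,r,n}(x)}{z-x}\,\om_{2\{1\}_{2\rho+1}}^{\{\ell s\}_\rho\ell}(x)\dd x,
$$
and the key observation is that the divided difference $\frac{A_{\rho,r,n}(z)-A_{\rho,r,n}(x)}{z-x}$ is a genuine polynomial in the two variables $(z,x)$. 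Integrating it against the $L^1$ weight over $x\in[0,1]$ yields a polynomial in $z$; the same holds for the $B$- and $C$-discrepancies. Collecting everything, including the surviving term $D_{r,n}(z)$, I conclude that
$$
S_{r,n}(z)-\int_0^1\frac{P_{r,n}(x)}{z-x}\dd x=p(z)
$$
for some $p\in\C[z]$, valid for all $z\in\C\setminus[0,1]$.

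Finally I would eliminate $p$ by examining the behaviour at infinity. By the Pad\'e condition of $\mathcal{P}_{r,n}$ the function $S_{r,n}(z)$ is $\gdo\big(z^{-(r+1)(n+1)}\big)$ as $z\to\infty$, hence tends to $0$; and the Stieltjes transform $\int_0^1\frac{P_{r,n}(x)}{z-x}\dd x$ of the $L^1$-function $P_{r,n}$ likewise tends to $0$ as $z\to\infty$. Therefore $p(z)\to 0$ as $z\to\infty$, which forces $p\equiv 0$, since a polynomial possessing a finite limit at infinity must be constant. This yields the asserted identity $S_{r,n}(z)=\int_0^1\frac{P_{r,n}(x)}{z-x}\dd x$. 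The only step demanding care is the polynomiality of the discrepancy terms (which rests on the elementary divided-difference identity) together with the interchange of the finite sums and the integrals; the vanishing-at-infinity argument that kills $p$ is then immediate, and I regard it as the decisive, rather than the hardest, point.
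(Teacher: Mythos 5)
Your proof is correct and takes essentially the same route as the paper's own: apply the Stieltjes representation of Lemma~\ref{lem:weight} to each polylogarithm, observe via the divided-difference identity that the discrepancy between $\sum A_{\rho,r,n}(z)\int_0^1\frac{\om(x)}{z-x}\,\dd x$ and $\int_0^1\frac{P_{r,n}(x)}{z-x}\,\dd x$ is a polynomial in $z$, and then kill that polynomial by letting $z\to\infty$, where both $S_{r,n}(z)$ and the Stieltjes transform of the $L^1$ function $P_{r,n}$ vanish. Your intermediate identity $S_{r,n}(z)=\int_0^1\frac{P_{r,n}(x)}{z-x}\,\dd x+p(z)$ is exactly the paper's Eq.~\eqref{eq:Spoly}.
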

\begin{proof} By definition of $S_{r,n}(z)$ and Lemma \ref{lem:weight}, for any $z\in \mathbb C \setminus [0,1]$  we have
\begin{align*}
S_{r,n}(z)&=\sum_{\rho=0}^r \bigg[ A_{\rho,r,n}(z) \int_0^1 \frac{\om_{2\{1\}_{2\rho+1}}^{\{\ell s\}_\rho\ell}(x)}{z-x} \dd x
+ B_{\rho,r,n}(z) \int_0^1\frac{\om_{\{1\}_{2\rho+2}}^{\{\ell s\}_\rho\ell}(x)}{z-x} \dd x
\\ & \hspace{4cm}+C_{\rho,r,n}(z)
\int_0^1\frac{\om_{\{1\}_{2\rho+1}}^{\{s\ell\}_\rho}(x)}{z-x} \dd x\bigg] + D_{r,n}(z)
\\ &= \int_0^1 \frac{P_{r,n}(x)}{z-x} \dd x + \sum_{\rho=0}^r \int_0^1 
\bigg[\frac{A_{\rho,r,n}(z)-A_{\rho,r,n}(x)}{z-x}\om_{2\{1\}_{2\rho+1}}^{\{\ell s\}_\rho\ell}(x) 
\\&+ 
\frac{B_{\rho,r,n}(z)-B_{\rho,r,n}(x)}{z-x}\om_{\{1\}_{2\rho+2}}^{\{\ell s\}_\rho\ell}(x) + 
\frac{C_{\rho,r,n}(z)-C_{\rho,r,n}(x)}{z-x}\om_{\{1\}_{2\rho+1}}^{\{s\ell \}_\rho}(x)
\bigg]\dd x + D_{r,n}(z).
\end{align*}
Hence, 
\begin{equation}\label{eq:Spoly}
S_{r,n}(z)=\int_0^1 \frac{P_{r,n}(x)}{z-x} \dd x + \textup{polynomial}(z) .
\end{equation}
But, as $z\to \infty$, $S_{r,n}(z)=\mathcal{O}(1/z)$ and 
$\int_0^1 \frac{P_{r,n}(x)}{z-x} \dd x \to 0$  (because 
$P_{r,n}(x)\in L^1([0,1])$, as noticed above). Therefore, the polynomial in~\eqref{eq:Spoly} is identically $0$ and 
this completes the proof of Lemma \ref{lem:3}.
\end{proof}

\section{Resolution of the Pad\'e problem $\mathcal{P}_{r,n}$}\label{sec:proofthm1}

In this section we prove Theorem \ref{theo:1}, using the tools of 
\S\S~\ref{sec:technicalities} and \ref{sec:weight}. Starting with a 
solution $S_{r,n}(z)$ of the Pad\'e problem $\Prn$, we apply the differential 
operator $\frac{z^{n+1}}{n!} \big( \frac{ \dd }{\dd z}\big)^{n+1}$ and prove in 
\S\S~ \ref{subsecfirstred} and \ref{subsecsecondred} that the resulting function 
is a solution of another Pad\'e approximation problem, denoted by $\Qrn$ and 
stated in \S~\ref{sec:sorokinpi2}. Then we observe in  \S~\ref{sec:sorokinpi2} 
that $\Qrn$ is nothing but Sorokin's problem~\cite{sorokin1}  for $\pi^2$, denoted 
by $\soro$, up to a change of variable $z\mapsto 1-z$. Since Sorokin has proved 
that $\soro$ has a unique solution up to proportionality, the same result holds for $\Qrn$ and $\Prn$.

To conclude the proof of Theorem \ref{theo:1}, we deduce in \S\S~\ref{subsec54} and \ref{subseccalcint} the integral
 representation \eqref{eq:intro6} of $S_{r,n}(z)$ from Sorokin's  integral
 representation of the solution of $\soro$, using the integral operator introduced in \S~\ref{subsec32}.

\subsection{First reduction} \label{subsecfirstred}

Let $S_{r,n}(z)$ be a solution  of the Pad\'e problem $\Prn$. 
By Lemma~\ref{lem:4}, there exist some polynomials $\check{A}_{\rho,r,n}(z)$,  $\check{B}_{\rho,r,n}(z)$ 
and  $\check{C}_{r,n}(z)$ of degree $\le 2n+1$ such that
\begin{multline}
\widehat{S}_{r,n}(z):=\frac{z^{n+1}}{n!}S^{(n+1)}_{r,n}(z) 
= \sum_{\rho=0}^r \bigg[\frac{\check{A}_{\rho,r,n}(z)}{(1-z)^{n+1}} \li_{\{1\}_{2\rho+2}}^{\{\ell s\}_\rho \ell}\bigg(\frac1z\bigg) 
\\
+\frac{\check{B}_{\rho,r,n}(z)}{(1-z)^{n+1}} \li_{\{1\}_{2\rho+1}}^{\{s\ell\}_\rho}\bigg(\frac1z\bigg)  \bigg] 
+ \frac{\check{C}_{r,n}(z)}{(1-z)^{n+1}} = \mathcal{O}\bigg(\frac{1}{z^{(r+1)(n+1)}}\bigg).\label{eq:66}
\end{multline}
As in \S~\ref{sec:weight} we consider the function $P_{r,n}(z)$ defined by 
$$
P_{r,n}(z) = \sum_{\rho=0}^r \bigg[ A_{\rho,r,n}(z) \om_{2\{1\}_{2\rho+1}}^{\{\ell s\}_\rho\ell}(z) 
+ B_{\rho,r,n}(z) \om_{\{1\}_{2\rho+2}}^{\{\ell s\}_\rho\ell}(z)+C_{\rho,r,n}(z)
\om_{\{1\}_{2\rho+1}}^{\{s\ell\}_\rho}(z)\bigg].
$$
Since it is obtained from $S_{r,n}$ by replacing each polylogarithm by its weight, 
  it  obeys the same derivation rules (see the remark before Lemma \ref{lem:1}). This implies that  
\begin{align}
\widehat{P}_{r,n}(z):=\frac{z^{n+1}}{n!}P^{(n+1)}_{r,n}(z) 
&= \sum_{\rho=0}^r \bigg[\frac{\check{A}_{\rho,r,n}(z)}{(1-z)^{n+1}} \om_{\{1\}_{2\rho+2}}^{\{\ell s\}_\rho \ell}(z) 
+\frac{\check{B}_{\rho,r,n}(z)}{(1-z)^{n+1}} \om_{\{1\}_{2\rho+1}}^{\{s\ell\}_\rho}(z)  \bigg] \notag
\\
&=\sum_{\rho=0}^r \bigg[\frac{\check{A}_{\rho,r,n}(z)}{(1-z)^{n+1}} \li_{\{1\}_{2\rho+1}}^{\{s\ell\}_\rho}(z) 
+\frac{\check{B}_{\rho,r,n}(z)}{(1-z)^{n+1}} \li_{\{1\}_{2\rho}}^{\{\ell s\}_{\rho-1}\ell}(z)  \bigg] \label{eq:5}
\end{align}
with the same polynomials $\check{A}_{\rho,r,n}(z)$ and $\check{B}_{\rho,r,n}(z)$; here we have used Eqs.~\eqref{eq:1} 
and~\eqref{eq:2} in Lemma~\ref{lem:1} to compute the weights. 

Now, by Lemmas \ref{lemannulation},~\ref{lem:2} and the Pad\'e conditions at $z=1$ in $\mathcal{P}_{r,n}$ 
for $U_{j,r,n}$ and $V_{j,r,n}$, the function $\widehat{P}_{r,n}(z)$ is necessarily of the form
\begin{equation}\label{eq:6}
\widehat{P}_{r,n}(z)=\sum_{j=0}^r \bigg[h_{2j+1}(z) \li_{\{1\}_{2j+1}}^{\{s\ell\}_j}(z)
+ h_{2j}(z) \li_{\{1\}_{2j}}^{\{\ell s\}_{j-1}\ell}(z)  \bigg]
\end{equation}
for some  functions $h_j$ holomorphic at $z=1$. Now we have obtained two expressions for 
$\widehat{P}_{r,n}(z)$, namely Eqns. \eqref{eq:5} and~\eqref{eq:6}. Using Lemma \ref{lemmerom} they have to coincide, that is  
$ \frac{\check{A}_{\rho,r,n}(z)}{(1-z)^{n+1}} = h_{2\rho+1}(z)$ and 
$\frac{\check{B}_{\rho,r,n}(z)}{(1-z)^{n+1}} =  h_{2\rho}(z)$ for any $\rho=0, \ldots, r$. Therefore $(1-z)^{n+1}$ 
divides $\check{A}_{\rho,r,n}(z)$ and $\check{B}_{\rho,r,n}(z)$.

We now claim that $(1-z)^{n+1}$ also divides $\check{C}_{r,n}(z)$. To prove  this, we use the integral 
representation for $S_{r,n}(z)$ given by Lemma~\ref{lem:3}.  Differentiating $n+1$ times under the integral, we obtain
$$
\widehat{S}_{r,n}(z)=(n+1)(-z)^{n+1}\int_0^1 \frac{P_{r,n}(x)}{(z-x)^{n+2}} \dd x.
$$
Again by Lemma~\ref{lem:2} and the Pad\'e conditions at $z=1$ in $\mathcal{P}_{r,n}$ 
for $U_{r,n,j}$ and $V_{r,n,j}$, we deduce that   
$$
P_{r,n}(x)=\mathcal{O}\big((1-x)^{n+1} (1+\vert \log(1-x)\vert^{2r+1})\big)
$$ 
as $x\to 1$, $x<1$.  Therefore 
the singularity of $\widehat{S}_{r,n}(z)$ at $z=1$  is  
at most a power of logarithm. 
The expression~\eqref{eq:66} for $\widehat{S}_{r,n}(z)$, together with 
the above deductions made for $\check{A}_{\rho,r,n}(z)$ and $\check{B}_{\rho,r,n}(z)$, implies the claim.

We can summarize the above results as follows:  there exist   polynomials $\widehat{A}_{\rho,r,n}(z)$,  
$\widehat{B}_{\rho,r,n}(z)$ ($\rho \in\{0, \ldots, r\}$) and $\widehat{C}_{r,n}(z)$, all of 
degree at most $n$, such that
\begin{multline}
\widehat{S}_{r,n}(z)
= \sum_{\rho=0}^r \bigg[\widehat{A}_{\rho,r,n}(z) \li_{\{1\}_{2\rho+2}}^{\{\ell s\}_\rho \ell}\bigg(\frac1z\bigg) 
\\
+\widehat{B}_{\rho,r,n}(z) \li_{\{1\}_{2\rho+1}}^{\{s\ell\}_\rho}\bigg(\frac1z\bigg)  \bigg] 
+ \widehat{C}_{r,n}(z) = \mathcal{O}\bigg(\frac{1}{z^{(r+1)(n+1)}}\bigg).\label{eq:7}
\end{multline}

\subsection{Second reduction} \label{subsecsecondred}

We want to find further Pad\'e conditions involving the polynomials $\widehat{A}_{\rho,r,n}(z)$,  
$\widehat{B}_{\rho,r,n}(z)$ ($\rho \in\{0, \ldots, r\}$) and $\widehat{C}_{r,n}(z)$. For this, we form the functions
$$
Q_{j,r,n}:=\sum_{\rho=j}^r \bigg[-A_{\rho,r,n}(z) \li_{2\{1\}_{2\rho-2j}}^{\{s\ell\}_{\rho-j}}(z) 
+ B_{\rho,r,n}(z) \li_{\{1\}_{2\rho-2j+1}}^{\{s\ell\}_{\rho-j}}(z) + 
C_{\rho,r,n}(z) \li_{\{1\}_{2\rho-2j}}^{\{\ell s\}_{\rho-j-1}\ell}(z)\bigg]
$$
where $j=0, \ldots, r$, and  $A_{\rho,r,n}(z)$, $B_{\rho,r,n}(z)$, $C_{\rho,r,n}(z)$ are 
the polynomials in our initial Pad\'e problem $\mathcal{P}_{r,n}$. Each 
$Q_{j,r,n}(z)$ is holomorphic at $z=0$ and the rules of differentiation of multiple 
 polylogarithms (see \S~\ref{sec:diff})  
 show that
\begin{multline*}
\widehat{Q}_{j,r,n}(z):=\frac{z^{n+1}}{n!}Q_{j,r,n}^{(n+1)}(z)
\\
=\sum_{\rho=j}^r \bigg[\widehat{A}_{\rho,r,n}(z) \li_{\{1\}_{2\rho-2j+1}}^{\{s\ell\}_{\rho-j}}(z) 
+ \widehat{B}_{\rho,r,n}(z) \li_{\{1\}_{2\rho-2j}}^{\{\ell s\}_{\rho-j-1}\ell}(z) \bigg] =  \mathcal{O}(z^{n+1})
\end{multline*}
for all $j=0, \ldots r$. The main point here is that the polynomials $\widehat{A}_{\rho,r,n}(z) $ 
and $ \widehat{B}_{\rho,r,n}(z)$ are the same as 
in Eq. \eqref{eq:7}.

\subsection{The intermediate Pad\'e problem $\mathcal{Q}_{r,n}$}\label{sec:sorokinpi2}

The previous two sections show that any solution $S_{r,n}(z)$  to the problem $\mathcal{P}_{r,n}$ 
yields (by differentiating $n+1$ times and multiplying by $z^{n+1}/n!$) a solution to the following problem: given   
non-negative integers $r$ and $n$,  find polynomials $\widehat{A}_{\rho,r,n}(z)$, 
$ \widehat{B}_{\rho,r,n}(z)$ (for $0 \leq \rho \leq r$) and $ \widehat{C}_{r,n}(z) $, 
of degrees $\leq n$, such that the following holds:
\begin{align}
\widehat{S}_{r,n}(z)&
:= \sum_{\rho=0}^r \bigg[\widehat{A}_{\rho,r,n}(z) \li_{\{1\}_{2\rho+2}}^{\{\ell s\}_\rho \ell}\bigg(\frac1z\bigg) 
 +\widehat{B}_{\rho,r,n}(z) \li_{\{1\}_{2\rho+1}}^{\{s\ell\}_\rho}\bigg(\frac1z\bigg)  \bigg]  \notag
\\
&\hspace{8cm} + \widehat{C}_{r,n}(z) = \mathcal{O}\bigg(\frac{1}{z^{(r+1)(n+1)}}\bigg),  \notag
\\
\widehat{Q}_{j,r,n}(z)&:=\sum_{\rho=j}^r \bigg[\widehat{A}_{\rho,r,n}(z) 
\li_{\{1\}_{2\rho-2j+1}}^{\{s\ell\}_{\rho-j}}(z) + \widehat{B}_{\rho,r,n}(z) 
\li_{\{1\}_{2\rho-2j}}^{\{\ell s\}_{\rho-j-1}\ell}(z) \bigg] \notag
\\
&\hspace{8cm} = \mathcal{O}(z^{n+1}), \quad j=0, \ldots, r.  \notag
\end{align}
We shall denote this Pad\'e approximation problem by  $\mathcal{Q}_{r,n}$. 
It amounts to solving a linear system of $(3r+4)(n+1)-1$ equations 
in $(3r+4)(n+1)$ unknowns (the coefficients of the polynomials). Hence it has a least one non trivial 
solution and our next task is to prove that is has exactly one solution up to a multiplicative constant.

To do so, we will identify the problem with one already solved by Sorokin~\cite{sorokin1}. 
We first observe the effect of changing $z$ to $1-z$ in the Pad\'e problem  $\mathcal{Q}_{r,n}$.

\begin{lem}\label{lem:6} For any $z\in \mathbb C\setminus [0,1]$, we have
\begin{align*}
\li_{\{1\}_{2\rho+1}}^{\{s\ell \}_\rho}\bigg(\frac1z\bigg)&
  = (-1) ^{\rho+1}\li_{1\{2\}_{\rho}}^{\{s\}_\rho}\bigg(\frac1{1-z}\bigg), 
\\
\li_{\{1\}_{2\rho+2}}^{\{\ell s\}_\rho \ell}\bigg(\frac1z\bigg)&
  = (-1) ^{\rho+1}\li_{\{2\}_{\rho+1}}^{\{s\}_\rho}\bigg(\frac1{1-z}\bigg).
\end{align*}
\end{lem}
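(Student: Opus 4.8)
The two identities in Lemma~\ref{lem:6} have the same shape, so I would prove them simultaneously by induction on $\rho$, exploiting the differentiation rules from \S\ref{sec:diff}. The strategy is to check that the two sides of each claimed identity are primitives of the same function, and then to pin down the constant of integration by evaluating at a convenient point (the base point $z\to\infty$, i.e. $1/z\to 0$ and $1/(1-z)\to 0$, where all the polylogarithms involved vanish).

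\textbf{Differentiating both sides.}
First I would compute the derivative of each side with respect to $z$ using the rules in \S\ref{sec:diff}. On the left, $\li_{\{1\}_{2\rho+1}}^{\{s\ell\}_\rho}(1/z)$ begins with the letter $s$ in its superscript (reading $a_1=s$), so
$$
\frac{\dd}{\dd z}\Big[\li_{\{1\}_{2\rho+1}}^{\{s\ell\}_\rho}\Big(\tfrac1z\Big)\Big]
=\frac{1}{z(1-z)}\,\li_{\{1\}_{2\rho}}^{\{\ell s\}_{\rho-1}\ell}\Big(\tfrac1z\Big),
$$
and similarly $\li_{\{1\}_{2\rho+2}}^{\{\ell s\}_\rho\ell}(1/z)$ begins with $\ell$, giving derivative $\frac{1}{1-z}\li_{\{1\}_{2\rho+1}}^{\{s\ell\}_\rho}(1/z)$. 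On the right I would apply the analogous rules in the variable $w=1/(1-z)$, using $\frac{\dd w}{\dd z}=\frac{1}{(1-z)^2}=w^2$ and noting $1-w^{-1}=-z/(1-z)$, so that the prefactors $\frac{1}{w(1-w)}$ and $\frac{1}{1-w}$ produced by the differentiation rules, multiplied by $w^2$, reproduce exactly $\frac{1}{z(1-z)}$ and $\frac{1}{1-z}$ up to the sign recorded in the statement. The signs $(-1)^{\rho+1}$ are precisely engineered so that differentiating each identity reduces it to the previous one: the derivative of the first identity at level $\rho$ is (a multiple of) the second identity at level $\rho$, and the derivative of the second at level $\rho$ is the first at level $\rho$ with $\rho$ lowered. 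This interlocking is the crux, and I expect the careful bookkeeping of these signs and prefactors to be the main obstacle.

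\textbf{Induction and base case.}
Granting that differentiation relates consecutive levels, the two families of identities are established together by induction on $\rho$. The base case $\rho=0$ reads $\li_1(1/z)=-\li_1(1/(1-z))$, which is immediate from $\li_1(w)=-\log(1-w)$ together with $1-\tfrac1z=\tfrac{z-1}{z}$ and $1-\tfrac1{1-z}=\tfrac{-z}{1-z}$, whose product is $1$, so the two logarithms are negatives of one another on $\C\setminus[0,1]$ (with the branch fixed by analytic continuation from the base point). For the inductive step, having matched derivatives, the two sides of each identity differ by a constant; this constant vanishes because, as $z\to\infty$, both $1/z$ and $1/(1-z)$ tend to $0$, where every multiple polylogarithm of positive depth vanishes. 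Hence the constant is $0$ and the identity holds on all of $\C\setminus[0,1]$ by analytic continuation, completing the induction.
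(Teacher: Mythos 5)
Your proposal is correct and takes essentially the same route as the paper's own proof: induction on $\rho$, differentiation of both sides via the rules of \S~\ref{sec:diff}, and determination of the integration constant by letting $z\to\infty$, where both sides vanish. One bookkeeping slip, harmless because your displayed derivative computations are right: your summary sentence swaps the interlocking --- differentiating the \emph{second} identity at level $\rho$ yields the \emph{first} identity at the \emph{same} level $\rho$, while differentiating the first identity at level $\rho$ yields the second identity at level $\rho-1$, which is what makes the induction close starting from the base case $\li_1(1/z)=-\li_1(1/(1-z))$.
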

\begin{proof} We prove these identities by induction on $\rho$. They 
hold trivially  for $\rho=0$ and by differentiation of both sides at level $\rho$, we get 
the identity at level $\rho-1$. We deduce that the identity at level $\rho$ 
holds, up to some additive constant. This constant must be $0$ because both sides vanish at $z=\infty$.
\end{proof}

Therefore, when we change $z$ to $1-z$, the Pad\'e problem  $\mathcal{Q}_{r,n}$  becomes
\begin{align*}
\widehat{S}_{r,n}(1-z)&
:= \sum_{\rho=0}^r(-1)^{\rho+1} \bigg[\widehat{A}_{\rho,r,n}(1-z) \li_{\{2\}_{\rho+1}}^{\{s\}_\rho}\bigg(\frac1z\bigg) 
 +\widehat{B}_{\rho,r,n}(1-z) \li_{1\{2\}_{\rho}}^{\{s\}_\rho}\bigg(\frac1z\bigg)  \bigg] 
\\
&\qquad\qquad
+ \widehat{C}_{r,n}(1-z) = \mathcal{O}\bigg(\frac{1}{(1-z)^{(r+1)(n+1)}}\bigg)
=\mathcal{O}\bigg(\frac{1}{z^{(r+1)(n+1)}}\bigg)
\\
\widehat{Q}_{j,r,n}(1-z)&:=\sum_{\rho=j}^r \bigg[\widehat{A}_{\rho,r,n}(1-z) \li_{\{1\}_{2\rho-2j+1}}^{\{s\ell\}_{\rho-j}}(1-z)
\\
& \qquad\qquad + \widehat{B}_{\rho,r,n}(1-z) 
\li_{\{1\}_{2\rho-2j}}^{\{\ell s\}_{\rho-j-1}\ell}(1-z) \bigg] =  \mathcal{O}((1-z)^{n+1}), \quad j=0, \ldots, r.
\end{align*}
Let us define
\begin{align*}
\widetilde{A}_{\rho,r,n}(z)&=(-1)^{\rho+1}\widehat{A}_{\rho,r,n}(1-z), \quad
\widetilde{B}_{\rho,r,n}(z)=(-1)^{\rho+1}\widehat{B}_{\rho,r,n}(1-z),
\\
\widetilde{C}_{r,n}(z)&=\widehat{C}_{r,n}(1-z), \quad
\widetilde{S}_{r,n}(z)=\widehat{S}_{r,n}(1-z), \quad
\widetilde{Q}_{j,r,n}(z)=-\widehat{Q}_{j,r,n}(1-z).
\end{align*}
With these notations, the Pad\'e problem  $\mathcal{Q}_{r,n}$ now reads
\begin{align*}
\widetilde{S}_{r,n}(z):=&\sum_{\rho=0}^r \bigg[\widetilde{A}_{\rho,r,n}(z)
\li_{\{2\}_{\rho+1}}^{\{s\}_\rho}\bigg(\frac1z\bigg) + \widetilde{B}_{\rho,r,n}(z)
\li_{1\{2\}_{\rho}}^{\{s\}_\rho}\bigg(\frac1z\bigg)\bigg] + \widetilde{C}_{r,n}(z) = \mathcal{O}\bigg(\frac{1}{z^{(r+1)(n+1)}}\bigg)
\\
\widetilde{Q}_{j,r,n}(z)&:=\sum_{\rho=j}^r (-1)^\rho\bigg[\widetilde{A}_{\rho,r,n}(z) \li_{\{1\}_{2\rho-2j+1}}^{\{s\ell\}_{\rho-j}}(1-z)
+ \widetilde{B}_{\rho,r,n}(z) \li_{\{1\}_{2\rho-2j}}^{\{\ell s\}_{\rho-j-1}\ell}(1-z) \bigg]
\\ &\hspace{8cm}=  \mathcal{O}((1-z)^{n+1}), \quad j=0, \ldots, r.
\end{align*}
In spite of different notations, we recognize here Sorokin's problem~\cite{sorokin1} 
for $\pi^2$ of weight $2r+2$,  which we denote by $\soro$ from now on. 
Sorokin proved that this problem has a unique solution up to proportionality. Therefore the same 
property holds for $\Qrn$, and also for $\Prn$. This concludes the proof of Theorem \ref{theo:1}, 
except for the integral representation \eqref{eq:intro6}  of $S_{r,n}(z)$ that we shall prove now.

\subsection{Hypergeometric integrals for $\tilde{S}_{r,n}(z)$ and $S_{r,n}(z)$} \label{subsec54}

Sorokin has   found an explicit integral formula for the solution 
$\widetilde{S}_{r,n}(z)$ of his Pad\'e problem $\soro$ stated in \S~\ref{sec:sorokinpi2} 
(see~\cite[Lemma 17, p. 1835]{sorokin1}), namely
\begin{equation}\label{eq:intro2}
\widetilde{S}_{r,n}(z)=(-1)^{(r+1)n}\int_{[0,1]^{2r+2}} \prod_{j=1}^{r+1} \frac{x_j^n(1-x_j)^n y_j^n(1-y_j)^n}
{\big(\frac{z}{x_1y_1\cdots x_{j-1}y_{j-1}}-x_jy_j\big)^{n+1}} 
\dd x_j\dd y_j.
\end{equation}
In this and the next sections we shall deduce from it the  integral expression \eqref{eq:intro6}  
of $S_{r,n}(z)$, using the relation
\begin{equation}\label{eq:11}
\frac{z^{n+1}}{n!}S_{r,n}^{(n+1)}(z)= \widetilde{S}_{r,n}(1-z)
\end{equation}
and the integral operator defined in \S~\ref{subsec32}. 

\medskip

To begin with, we recall that 
Sorokin solved his Pad\'e approximation problem $\soro$ recursively 
and showed that, for any integer  $r\ge 1$ and 
any $z\in \mathbb C\setminus [0,1]$, 
\begin{equation}\label{eq:13}
\tilde{S}_{r-1,n}(z) = \frac{1}{n!^2}z^{n+1}(1-z)^{n+1}\big(z^{n+1}\tilde{S}_{r,n}^{(n+1)}(z)\big)^{(n+1)}
\end{equation}
and 
$$
\tilde{S}_{0,n}(z)=\int_0^1\int_0^1 \frac{x^n(1-x)^ny^n(1-y)^n}{(z-xy)^{n+1}} \dd x\dd y.
$$
It is not hard to see that, with the notation of \S~\ref{subsec32}, we have for $z\in\C\setminus[0,1]$:
\begin{equation}\label{eq59bis}
\tilde{S}_{0,n}(z) = \BBB^{n+1}_{n+1,0}  \left(\int_0^1 \frac{x^n(1-x)^n}{(z-x)^{n+1}}\right)=
\BBB^{n+1}_{n+1,0}  \BBB^{n+1}_{n+1,n+1}  (\boldsymbol{1}),
\end{equation}
where $\boldsymbol{1}$ is the constant function equal to 1 on $\C\setminus[0,1]$. 
We can apply the general properties of  hypergeometric integrals proved in  \S~\ref{subsec32} to~\eqref{eq:13} and we get 
the following result, which 
is nothing but~\eqref{eq:intro2} written in a different language (see \S~\ref{subseccalcint} 
for details). We recall that $f^{\partial}(z):=f(1-z)$ and we denote by $\BBB^k 
= \BBB\circ\BBB \circ\cdots\circ\BBB$ the composition of an integral 
operator $\BBB$ with itself $k$ times.

\begin{prop}\label{prop:1} For any $z\in \mathbb{C}\setminus[0,1]$ and any integer $r\ge 0$, we have
\begin{equation}\label{eq:14}
\tilde{S}_{r,n}(z)= (\opa)^{r+1} (\boldsymbol{1})(z) 
\end{equation}
 and
\begin{equation}\label{eq:15}
\tilde{S}_{r,n}^{\partial}(z)=(\opb)^{r+1} (\boldsymbol{1})(z).
\end{equation}
\end{prop}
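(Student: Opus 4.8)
The plan is to prove Proposition~\ref{prop:1} by induction on $r\geq 0$, deriving \eqref{eq:14} and \eqref{eq:15} together. The content is that Sorokin's recursion \eqref{eq:13}, together with the base case \eqref{eq59bis}, can be re-expressed in the language of the operator $\BBB^{n+1}_{a,b}$ introduced in \S~\ref{subsec32}. The two formulas \eqref{eq:14} and \eqref{eq:15} are equivalent via Lemma~\ref{lem:8}: indeed, writing $N=(r+1)(n+1)$ and noting that each operator factor carries the same superscript $n+1$, Lemma~\ref{lem:8} swaps $\BBB^{n+1}_{n+1,0}\BBB^{n+1}_{n+1,n+1}$ with $\BBB^{n+1}_{0,n+1}\BBB^{n+1}_{n+1,n+1}$ up to a sign $(-1)^{2(r+1)+2(r+1)n}=1$ when applied to $\boldsymbol 1$ (the swap $\BBB^{n+1}_{n+1,n+1}\mapsto\BBB^{n+1}_{n+1,n+1}$ is trivial and the two outer factors exchange). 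Hence it suffices to establish \eqref{eq:14}, and \eqref{eq:15} follows by applying $f\mapsto f^\partial$ and invoking Lemma~\ref{lem:8}.

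First I would check the base case $r=0$. This is exactly the computation \eqref{eq59bis}: one reads off from Sorokin's formula for $\tilde S_{0,n}(z)$ that it equals $\BBB^{n+1}_{n+1,0}\BBB^{n+1}_{n+1,n+1}(\boldsymbol 1)$, which is the right-hand side of \eqref{eq:14} with $r=0$. The verification that the double integral matches the composition of the two operators amounts to unwinding Definition~\eqref{eq:12} twice and performing the change of variable $x=z/u$ as in the proof of Lemma~\ref{lem:7}; this is the routine identification already announced after \eqref{eq59bis}.

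For the inductive step I would assume \eqref{eq:14} holds for $r-1$ and rewrite the recursion \eqref{eq:13} so that it reads $\tilde S_{r-1,n} = \BBB^{n+1}_{n+1,n+1}\BBB^{n+1}_{n+1,0}(\tilde S_{r,n})$, or equivalently expresses $\tilde S_{r,n}$ as two applications of $\BBB$ to $\tilde S_{r-1,n}$. Concretely, one interprets \eqref{eq:13} through Lemma~\ref{lem:7}(ii): the inner operation $z^{n+1}\tilde S_{r,n}^{(n+1)}(z)$ together with the prefactor $\frac1{n!}$ is, by \eqref{eqlem7} with $(a,b)=(n+1,0)$, precisely the inverse of $\BBB^{n+1}_{n+1,0}$, and then multiplying by $\frac1{n!}z^{n+1}(1-z)^{n+1}$ and differentiating $n+1$ more times inverts $\BBB^{n+1}_{n+1,n+1}$ with $(a,b)=(n+1,n+1)$. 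Reading this backwards gives $\tilde S_{r,n} = \BBB^{n+1}_{n+1,0}\BBB^{n+1}_{n+1,n+1}(\tilde S_{r-1,n})$, and substituting the inductive hypothesis $\tilde S_{r-1,n}=(\opa)^{r}(\boldsymbol 1)$ yields $(\opa)^{r+1}(\boldsymbol 1)$, as desired. The holomorphy and vanishing-order hypotheses required to apply Lemma~\ref{lem:7}(ii) are supplied by the remark following Lemma~\ref{lem:7}: at each stage $F(\infty)=0$ and the accumulated parameters satisfy $a+b\geq n+1$, so every intermediate function is holomorphic on $\C\setminus[0,1]$, vanishes at $\infty$, and has the correct order there.

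The main obstacle I anticipate is the careful bookkeeping of the operator $\BBB$ as a two-sided inverse to the differential operation in \eqref{eq:13}: I must confirm that Lemma~\ref{lem:7}(ii) genuinely applies at each step, i.e.\ that the function produced by the recursion has $R(\infty)=0$ and satisfies \eqref{eqlem7} with exactly the parameters $(n+1,0)$ and $(n+1,n+1)$, so that the inversion is unambiguous. Once the parameter matching is pinned down, the induction is immediate, and the equivalence of \eqref{eq:14} and \eqref{eq:15} via Lemma~\ref{lem:8} requires only tracking the sign $(-1)^{p+n_1+\cdots+n_p}$, which for $p=2(r+1)$ equal operators of order $n+1$ is $(-1)^{2(r+1)(n+1)}=1$.
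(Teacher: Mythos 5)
Your proposal is correct and takes essentially the same route as the paper: induction on $r$ with base case \eqref{eq59bis}, the inductive step obtained by translating the recursion \eqref{eq:13} into $\tilde{S}_{r,n}=\opa(\tilde{S}_{r-1,n})$ via Lemma~\ref{lem:7}(ii) (whose hypotheses you correctly verify from the vanishing at $\infty$ and the parameter conditions of the remark following Lemma~\ref{lem:7}), and then \eqref{eq:15} deduced from \eqref{eq:14} by Lemma~\ref{lem:8} with sign $(-1)^{2(r+1)(n+1)}=1$. One notational slip only: your displayed identity $\tilde{S}_{r-1,n}=\BBB^{n+1}_{n+1,n+1}\BBB^{n+1}_{n+1,0}(\tilde{S}_{r,n})$ should involve the \emph{inverses} of these operators (the integral operators increase, not decrease, the order of vanishing at $\infty$), but your subsequent bookkeeping via \eqref{eqlem7} gets the correct relation $\tilde{S}_{r,n}=\BBB^{n+1}_{n+1,0}\BBB^{n+1}_{n+1,n+1}(\tilde{S}_{r-1,n})$, so the argument stands.
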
 
Eq. \eqref{eq:14} follows immediately from Eq. \eqref{eq59bis} and the relation 
$$\tilde{S}_{r,n} = \opa (\tilde{S}_{r-1,n}),$$
 which is just a translation of Eq. \eqref{eq:13} (using Lemma \ref{lem:7}). Then
Eq.~\eqref{eq:15} follows from~\eqref{eq:14} by means of Lemma~\ref{lem:8}.  Now Eq.~\eqref{eq:11} reads
\begin{equation}\label{eq:15bis}
\frac{z^{n+1}}{n!}S_{r,n}^{(n+1)}(z) = \tilde{S}_{r,n}^{\partial}(z)
\end{equation}
and  $\lim_{z\to \infty} S_{r,n} (z)=0$ for any $r\ge 0$, so that Lemma \ref{lem:7} yields
$$
S_{r,n}(z)=\BBB^{ n+1}_{n+1,0} (\widetilde{S}_{r,n}^{\partial})(z).
$$
Hence, by~\eqref{eq:15} in Proposition~\ref{prop:1}, we obtain the following result 
(using also Lemma \ref{lemenun} to take limits as $z\to 1$).
\begin{prop}\label{prop:2} For any $z\in \mathbb{C}\setminus[0,1]$ and any integer $r\ge 0$, we have 
\begin{equation}\label{eq:16}
S_{r,n}(z)=\BBB^{ n+1}_{n+1,0} (\opb)^{r+1} (\boldsymbol{1})(z).
\end{equation}
 Moreover, both sides of~\eqref{eq:16} are defined 
and equal for $z=1$.
\end{prop}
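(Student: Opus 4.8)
The plan is to read off \eqref{eq:16} by inverting the differential relation \eqref{eq:15bis} with the operator $\BBB^{n+1}_{n+1,0}$, and then substituting the closed form \eqref{eq:15} for $\tilde{S}_{r,n}^{\partial}$ provided by Proposition~\ref{prop:1}.

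First I would apply Lemma~\ref{lem:7} with parameters $a=n+1$ and $b=0$ to the function $F:=\tilde{S}_{r,n}^{\partial}$. By \eqref{eq:15} we have $F=(\opb)^{r+1}(\boldsymbol{1})$, which is holomorphic on $\C\setminus[0,1]$ and at $z=\infty$; tracking the order of vanishing at $\infty$ through the composition (the constant $\boldsymbol{1}$ has order $0$, and each factor $\opb$ raises the order by $n+1$) gives order $\omega=(r+1)(n+1)$, whence $\omega'=\omega+a+b-(n+1)=(r+1)(n+1)\geq 1$, so the hypotheses of Lemma~\ref{lem:7} are met. For these parameters the identity \eqref{eqlem7} reads $F(z)=\frac{1}{n!}z^{n+1}S_{r,n}^{(n+1)}(z)$, which is precisely \eqref{eq:15bis}. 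Since $S_{r,n}$ is holomorphic on $\C\setminus[0,1]$ (by the integral representation of Lemma~\ref{lem:3}) and at $z=\infty$ with $S_{r,n}(\infty)=0$ (its order there being $\geq(r+1)(n+1)$ by the Pad\'e conditions defining $\Prn$), part $(ii)$ of Lemma~\ref{lem:7} yields $S_{r,n}=\BBB^{n+1}_{n+1,0}(\tilde{S}_{r,n}^{\partial})$. Substituting \eqref{eq:15} gives exactly \eqref{eq:16}.

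It remains to treat the value at $z=1$, and here I would invoke Lemma~\ref{lemenun} twice. The two factors of $\opb$, namely $\BBB^{n+1}_{0,n+1}$ and $\BBB^{n+1}_{n+1,n+1}$, both have second index equal to $n+1\leq n+1$; since $\boldsymbol{1}$ is holomorphic at $z=1$, the first part of Lemma~\ref{lemenun} propagates through the $r+1$ copies of $\opb$ to show that $(\opb)^{r+1}(\boldsymbol{1})=\tilde{S}_{r,n}^{\partial}$ has at most a power-of-logarithm divergence as $z\to1$. The outermost operator $\BBB^{n+1}_{n+1,0}$ has second index $0\leq n$, so the second part of Lemma~\ref{lemenun} shows that the right-hand side of \eqref{eq:16} has a finite limit as $z\to 1$, obtained by setting $z=1$ in the integral \eqref{eq:12}. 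As both sides of \eqref{eq:16} agree on all of $\C\setminus[0,1]$, the function $S_{r,n}$ inherits this finite limit, so both sides are defined and equal at $z=1$.

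The proof is thus a chaining of results already at hand, and the only genuine bookkeeping lies in the passage to $z=1$: one must keep track of the second indices $b$ of the composed operators $\BBB^{n+1}_{a,b}$ and check the correct inequality at each stage ($b\leq n+1$ for the inner operators, to keep the logarithmic divergence under control, and $b\leq n$ for the outer operator, to force convergence), so that Lemma~\ref{lemenun} applies throughout.
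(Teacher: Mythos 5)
Your proof is correct and follows essentially the same route as the paper: invert \eqref{eq:15bis} via part $(ii)$ of Lemma~\ref{lem:7} with $(a,b)=(n+1,0)$ (checking holomorphy and $S_{r,n}(\infty)=0$ from the Pad\'e conditions), substitute \eqref{eq:15} from Proposition~\ref{prop:1}, and handle $z=1$ with Lemma~\ref{lemenun}. Your version merely makes explicit the bookkeeping (orders of vanishing at $\infty$, the inequalities $b\leq n+1$ for the inner operators and $b\leq n$ for the outer one) that the paper leaves to the reader.
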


\subsection{Explicit multiple integrals} \label{subseccalcint}

The integral expression for $S_{r,n}(z)$ given in Theorem~\ref{theo:1} is simply the  
explicit ``expansion'' of the formula \eqref{eq:16} given in Proposition~\ref{prop:2} above. Let us provide 
details on this expansion.

For any  function $F$ analytic on $\C\setminus [0,1]$ and at infinity,  Eq. \eqref{eq:12} in \S~\ref{subsec32} reads
$$
\BBB^{n+1}_{n+1,n+1} (F)(z)= (-1)^{n+1}  \int_0^1 \frac{u^n(1-u)^n}{(u-z)^{n+1}}F\Big(\frac zu\Big) \dd u.
$$
This function $\BBB^{n+1}_{n+1,n+1} (F)(z)$ is analytic on $\C\setminus [0,1]$ and at infinity, 
and vanishes to an order $\geq n+1$ at $\infty$ (using Lemma \ref{lem:7}). The same property 
can be proved  in the same way for the following function:
\begin{align*}
 \opb (F)(z)&=z^{n+1}\int_0^1 \frac{v^{-1}(1-v)^n}{(v-z)^{n+1}}
 \int_0^1\frac{u^n(1-u)^n}{(u-z/v)^{n+1}}F\Big(\frac z{uv}\Big) \dd u \dd v 
\\
&=z^{n+1}\int_0^1\int_0^1\frac{v^n(1-v)^nu^n(1-u)^n}{(v-z)^{n+1}(uv-z)^{n+1}}F\Big(\frac z{uv}\Big) \dd u \dd v.
\end{align*}
By induction on $r\ge 0$ this implies, using Eq. \eqref{eq:15}:
\begin{multline*}
\tilde{S}_{r,n}^{\partial}(z) = (\opb)^{r+1} (\boldsymbol{1})(z) =  z^{(r+1)(n+1)}
\\\times \int\limits_{[0,1]^{2(r+1)}}\frac{\displaystyle \prod_{j=1}^{r+1}\big((u_jv_j)^{(r-j+2)(n+1)-1}(1-u_j)^n(1-v_j)^n\big)}
{\displaystyle \prod_{j=1}^{r+1} \big((z-u_1v_1\cdots u_{j-1}v_{j-1}u_j)^{n+1}(z-u_1v_1\cdots u_{j}v_{j})^{n+1}\big)}
  \dd {\bf u} \dd {\bf v}.
\end{multline*}
Therefore the equality
$$
\BBB^{ n+1}_{n+1,0} (\opb)^{r+1} (\boldsymbol{1})(z)  = 
(-1)^{ n+1}  \int_0^1 u_0^{-1}(1-u_0)^{  n} \tilde{S}_{r,n}^{\partial}(z/u_0) \dd u_0
$$
yields, using Proposition \ref{prop:2}:
\begin{multline*}
S_{r,n}(z)=(-1)^{ n+1}z^{(r+1)  (n+1)} \\
\times \int_{[0,1]^{2r+3}} 
\frac{\displaystyle u_0^{(r+1) (n+1)-1}(1-u_0)^{ n} 
\prod_{j=1}^{r+1}\big((u_jv_j)^{(r-j+2)(n+1)-1}(1-u_j)^n(1-v_j)^n\big)}
{\displaystyle \prod_{j=1}^{r+1}\big((z-u_0u_1v_1\cdots u_{j-1}v_{j-1}u_j)^{n+1}
(z-u_0u_1v_1\cdots u_{j}v_{j})^{n+1}\big)} \dd{\bf u}\dd{\bf v}.
\end{multline*}
This completes the proof of Theorem~\ref{theo:1}.

\section{Beyond Vasilyev's conjecture: irrationality of odd zeta values} \label{subsecinf}

A natural problem is  to find a  
proof that the numbers  $\zeta(2r+1)$, $r\geq 0$, span an infinite-dimensional $\Q$-vector space~\cite{BR, rivoal}  
that would be analogous to Sorokin's proof that $\pi$ is transcendental \cite{sorokin1} 
(since Sorokin's result is equivalent to the fact that the numbers  $\zeta(2r)$, $r\geq 0$, 
span an infinite-dimensional $\Q$-vector space). In particular, such a proof would involve a 
Pad\'e approximation problem with multiple polylogarithms.

Let $\sigma$ be an integer such that $1 \leq \sigma \leq r+2$. To achieve this goal, it is 
enough to relate the very-well-poised hypergeometric series 
\begin{equation} \label{eqvwp}
\sum_{k=1}^{\infty} (k+\frac{n}2) \frac{(k-\sigma n)_{\sigma n} (k+n+1) _{\sigma n}}{(k)_{n+1}^{2r+4}},
\end{equation}
which can be used to prove the above mentioned result (see for instance \cite{fischler3}), 
to such a Pad\'e approximation problem. An analogous work has been done in \cite{firi}, where this 
series is related to a Pad\'e approximation problem involving only classical polylogarithms, namely of depth 1. 

We shall prove now that for $\sigma =1$ the  hypergeometric series  \eqref{eqvwp} is equal (up to a sign) 
to $S_{r,n}(1)$, thereby providing in this case the relation we are looking for. For any $\sigma$ we shall 
prove that this series is the value at $z=1$ of a function $S_{r,n,\sigma}(z)$ which generalizes  
$S_{r,n }(z)$; what is missing  is a Pad\'e approximation problem 
of which $S_{r,n,\sigma}(z)$  would be  a solution. We believe that a suitable generalisation of the problem  
$\Prn$ solved in Theorem \ref{theo:1} could have this property.

\bigskip

With this aim in view, we  consider the function $S_{r,n,\sigma}(z)$ defined by
$$
\frac{z^{n+1}}{n!}S_{r,n,\sigma}^{(\sigma n+1)}(z) = \tilde{S}_{r,n}^{\partial}(z)
$$
and  $\lim_{z\to \infty} S_{r,n,\sigma} (z)=0$; in this way we have $S_{r,n,1}(z)  =  S_{r,n }(z)$ 
(see Eq. \eqref{eq:15bis}).   We have
$$
S_{r,n,\sigma}(z)=\BBB^{\sigma n+1}_{n+1,0} (\widetilde{S}_{r,n}^{\partial})(z).
$$
The equality
$$
\BBB^{\sigma n+1}_{n+1,0} (\opb)^{r+1} (\boldsymbol{1})(z)  = 
(-1)^{\sigma n+1}z^{(\sigma - 1)  n } \int_0^1 u_0^{(1-\sigma)n-1}(1-u_0)^{\sigma n} \tilde{S}_{r,n}^{\partial}(z/u_0) \dd u_0
$$
yields, using Proposition \ref{prop:2}:
\begin{multline*}
S_{r,n,\sigma}(z)=(-1)^{\sigma n+1}z^{(r+\sigma)  n+r+1} \\
\times \int_{[0,1]^{2r+3}} 
\frac{\displaystyle u_0^{(r-\sigma+2) n+r}(1-u_0)^{\sigma n} 
\prod_{j=1}^{r+1}\big((u_jv_j)^{(r-j+2)(n+1)-1}(1-u_j)^n(1-v_j)^n\big)}
{\displaystyle \prod_{j=1}^{r+1}\big((z-u_0u_1v_1\cdots u_{j-1}v_{j-1}u_j)^{n+1}
(z-u_0u_1v_1\cdots u_{j}v_{j})^{n+1}\big)} \dd{\bf u}\dd{\bf v}.
\end{multline*}
This function has the following value at $z=1$:
\begin{multline*}
S_{r,n,\sigma}(1) = 
\\ (-1)^{\sigma n+1}  \int\limits_{[0,1]^{2r+3}} 
\frac{\displaystyle u_0^{(r-\sigma+2) n+r}(1-u_0)^{\sigma n} \prod_{j=1}^{r+1}\big((u_jv_j)^{(r-j+2)(n+1)-1}(1-u_j)^n(1-v_j)^n\big)}
{\displaystyle \prod_{j=1}^{r+1}
\big((1-u_0u_1v_1\cdots u_{j-1}v_{j-1}u_j)^{n+1}(1-u_0u_1v_1\cdots u_{j}v_{j})^{n+1}\big)} \dd{\bf u}\dd{\bf v}.
\end{multline*}

Using Proposition 17 of \cite{fischler2}  (which amounts to a change of variables) one obtains
$$S_{r,n,\sigma}(1) = (-1)^{\sigma n+1}  \int\limits_{[0,1]^{a-1}} \frac{\prod_{j=1}^{a-1} x_j^{\sigma n} 
(1-x_j)^n }{(1-x_1x_2\cdots x_{a-1})^{\sigma n+1} \prod_{2 \leq j \leq a-2 \atop j {\tiny \mbox{ even}}} 
(1-x_1x_2\cdots x_j)^{n+1}} \dd{\bf x}$$
with $a=2r+4$.
Then using Zlobin's result \cite{zlobin} or another change of variables 
(namely Th\'eor\`eme 10 of \cite{fischler2}), one obtains the Vasilyev-type integral 
$$S_{r,n,\sigma}(1) = (-1)^{\sigma n+1}  \int\limits_{[0,1]^{a-1}} \frac{\prod_{j=1}^{a-1} x_j^{\sigma n} (1-x_j)^n
 }{ Q_{a-1}(x_1,\cdots,x_{a-1})^{\sigma n+1} }  \dd{\bf x}.$$
Now Theorem 5 of \cite{zudilin} yields
$$S_{r,n,\sigma}(1) = (-1)^{\sigma n+1} \sum_{k=1}^{\infty} (k+\frac{n}2) \frac{(k-\sigma n)_{\sigma n} (k+n+1) _{\sigma n}}{(k)_{n+1}^a}.
$$
Up to a sign, this is exactly the very-well poised hypergeometric series \eqref{eqvwp}.

\def\refname{Bibliography}

S. Fischler, \'Equipe d'Arithm\'etique et de G\'eom\'etrie Alg\'ebrique, 
Universit\'e Paris-Sud, B\^atiment 425,
91405 Orsay Cedex, France

T. Rivoal,  Institut Fourier,  CNRS et Universit\'e Grenoble 1, 
100 rue des maths, BP 74, 38402 St Martin d'H\`eres Cedex, France 

\end{document}